\newtheorem{definition}{Definition}
\newtheorem{theorem}{Theorem}
\newtheorem{lemma}[theorem]{Lemma}
\newtheorem{conjecture}[theorem]{Conjecture}
\newtheorem{problem}[theorem]{Problem}
\tikzset{every loop/.style={min distance=20mm, looseness=30}}
\begin{document}

\title{Complete graph immersions and minimum degree}
\author{%
     Zden\v{e}k Dvo\v{r}\'ak\thanks{Computer Science Institute (CSI) of Charles University,
           Malostransk{\'e} n{\'a}m{\v e}st{\'\i} 25, 118 00 Prague, 
           Czech Republic. E-mail: \protect\href{mailto:rakdver@iuuk.mff.cuni.cz}{\protect\nolinkurl{rakdver@iuuk.mff.cuni.cz}}.
           Supported in part by (FP7/2007-2013)/ERC Consolidator grant LBCAD no.
	   616787.}
\and	   
Liana Yepremyan\thanks{School of Computer Science,
McGill University, Montreal, Canada. E-mail:
\protect\href{mailto:liana.yepremyan@mail.mcgill.ca}{\protect\nolinkurl{liana.yepremyan@mail.mcgill.ca}}.}
}
\date{\today}
\maketitle

\begin{abstract}
An \emph{immersion} of a graph $H$ in another graph $G$ is a one-to-one mapping
$\varphi:V(H)\rightarrow V(G)$ and a collection of edge-disjoint paths in $G$,
one for each edge of $H$, such that the path $P_{uv}$ corresponding to the edge
$uv$ has endpoints $\varphi(u)$ and $\varphi(v)$. The immersion is
\emph{strong} if the paths $P_{uv}$ are internally disjoint from
$\varphi(V(H))$. We prove that every simple graph of minimum degree at least
$11t+7$ contains a strong immersion of the complete graph $K_t$. This improves
on previously known bound of minimum degree at least $200t$ obtained by DeVos
et al.\cite{mindim}. Our result supports a conjecture of Lescure and
Meyniel~\cite{lesuremeyniel} (also  independently proposed by Abu-Khzam and
Langston~\cite{abu2003graph}), which is the analogue of famous Hadwiger's
conjecture for immersions and says that every graph without a $K_t$-immersion
is $(t-1)$-colorable.
\end{abstract}

\section{Introduction}

In this paper, \emph{graphs} are simple, without loops and parallel edges,
while \emph{multigraphs} are allowed to have loops and parallel edges.
In both cases, we require that the set of vertices is non-empty and finite.

A graph $H$ is a minor of another graph $G$ if $H$ can be obtained from a subgraph of
$G$ by contracting edges and deleting any resulting loops and parallel edges.
One of the most famous open problems in graph theory, Hadwiger's
conjecture~\cite{hadwiger} from 1943 says that every loopless graph without a
$K_{t}$-minor is $(t-1)$-colorable. This conjecture is widely open for $t\geq 7$;
while $t\le 4$ cases are trivial, $t=5$ case is equivalent to the
celebrated Four-Color Theorem and $t=6$ case was solved by Robertson, Seymour
and Thomas~\cite{robertsonseymourthomas}.   Note that a stronger conjecture by
Haj\'os was proposed in 1940's~\cite{hajos}. A graph $H$ is a \emph{topological
minor} of another graph $G$ if a subgraph of $G$ can be obtained from $H$
by subdividing some edges. Haj\'os conjectured that every graph without a
$K_t$-topological minor must be $(t-1)$-colorable. However, this is known to be
false in general; Catlin~\cite{catlin} disproved this conjecture for all
$t\geq 7$, for $t\leq4$ it follows from the results of Dirac~\cite{dirac}
and $t=5,6$ cases are still open. The topic of this paper is inspired by the
immersion variant of Hadwiger's conjecture proposed by Lescure and
Meyniel~\cite{lesuremeyniel} (and later, independently, by Abu-Khzam and
Langston~\cite{abu2003graph}). 

An \emph{immersion} of a graph $H$ in a graph $G$ is a one-to-one mapping
$\varphi:V(H)\rightarrow V(G)$ and a collection of edge-disjoint paths in $G$,
one for each edge of $H$, such that the path $P_{uv}$ corresponding to the edge
$uv$ has endpoints $\varphi(u)$ and $\varphi(v)$. The vertices in
$\varphi(V(H))$ are called \emph{branch vertices}. We also give an alternative
definition. We define the operation of \emph{splitting off} pairs of edges as
follows. A pair of distinct adjacent edges $uv$ and $vw$ is split off from
their common vertex $v$ by deleting the edges $uv$ and $vw$ and adding the edge
$uw$ (note that this might result in a parallel edge or a loop). We say that
$G$ contains an immersion of the graph $H$ if a graph isomorphic to $H$ can be
obtained from a subgraph of $G$ by splitting off pairs of edges (and removing
isolated vertices). Note that topological minor containment implies both minor and immersion containments, while minor  and immersion containments are
incomparable.
\begin{conjecture}[Lescure and Meyniel~\cite{lesuremeyniel}, Abu-Khzam and Langston~\cite{abu2003graph}]\label{conjectureimmersions}
Every graph without  a $K_t$-immersion is $(t-1)$-colorable.
\end{conjecture}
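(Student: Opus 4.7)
The plan is to argue by contradiction with a vertex-minimum counterexample. Suppose $G$ is a graph of smallest order that is not $(t-1)$-colorable yet contains no $K_t$-immersion. Minimality forces $G$ to be $t$-critical, i.e., $\chi(G-v)\le t-1$ for every $v\in V(G)$, and a standard Dirac-type argument then gives $\delta(G)\ge t-1$. The goal becomes to squeeze a $K_t$-immersion out of this critical graph, contradicting the assumption. Thus the conjecture reduces to the following statement, which I would try to establish: every $t$-critical graph contains a $K_t$-immersion.

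The natural engine is the splitting-off definition of immersion. Repeatedly splitting off pairs of edges at low-degree non-branch vertices reduces $G$ to a multigraph on a chosen set $S$ of $t$ would-be branch vertices, and it suffices to produce $S$ so that the resulting multigraph on $S$ has at least one edge between every pair and no loops. The classical tool is Mader's theorem on splitting off while preserving edge-connectivity: in an Eulerian (or sufficiently evenly edge-connected) graph one can split off at a vertex $v\ne s$ while keeping the edge-connectivity between all other pairs. Combined with Nash-Williams/Menger-type edge-disjoint linkage results, this lets one extract edge-disjoint paths joining prescribed pairs in $S$, provided the required connectivity is in place. The first substantive step is therefore to argue that a $t$-critical graph has local edge-connectivity at least $t-1$ between any two well-chosen vertices, using the fact that a separator of size $<t-1$ would let one extend a colouring of one side to the other.

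The second step is to choose $S$ wisely. A promising route is induction on $t$: take a vertex $v$ of degree $t-1$ and, in the subgraph induced on its neighbourhood together with a few carefully selected extra paths, apply the inductive hypothesis to obtain a $K_{t-1}$-immersion on $N(v)$; then use the $t-1$ edges at $v$ plus edge-disjoint paths in $G-v$ to add $v$ as the $t$-th branch vertex. This requires a linkage result of the form: given $t-1$ pairs of endpoints in a graph whose relevant cuts all have size $\ge t-1$, edge-disjoint paths between them exist. Establishing such a linkage inside a critical graph is where structural results on critical graphs (Gallai's theorem on the structure of the subgraph induced by vertices of degree $t-1$, and the density lower bounds of Kostochka and Stiebitz) would be used to rule out the bad configurations that obstruct the linkage.

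The main obstacle, and the reason this conjecture has been open for $t\ge 5$, is precisely the gap between what minimum degree alone can force and what criticality provides. The paper's best min-degree bound of $11t+7$, and even a conjectural improvement to the optimal $\Theta(t)$ threshold from a pure degree hypothesis, cannot reach $t-1$, because bounded-degree constructions (essentially $K_{t-1}$-free graphs of degree a bit above $t-1$) are not easy to rule out without using criticality in an essential way. The hardest step in the plan above is therefore the combinatorial one: turning ``no $(t-2)$-cut separates the colour classes of a $(t-1)$-colouring of $G-v$'' into actual edge-disjoint paths realising a $K_{t-1}$ among $N(v)$. Without a genuinely new way to feed criticality (as opposed to only $\delta(G)\ge t-1$) into splitting-off arguments, this plan stalls at exactly the point where the weaker theorem of the paper replaces $t-1$ with $11t+7$.
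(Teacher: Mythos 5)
The statement you were asked about is Conjecture~\ref{conjectureimmersions}, and the paper does not prove it: it is presented as an open problem, known only for $t\le 7$ (DeVos et al.\ and Lescure--Meyniel), with counterexamples to the stronger min-degree statement $f(t)=t-1$ for many $t\ge 8$. The paper's contribution, Theorem~\ref{maintheorem}, is the much weaker bound that minimum degree $11t+7$ forces a strong $K_t$-immersion. So there is no ``paper's own proof'' to compare against, and a correct answer here has to either cite the known small cases or acknowledge that the statement is open.

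Your proposal does the latter, which is the right call, and several of its preliminary steps are sound: a vertex-minimum counterexample is $t$-critical; Dirac's lemma gives $\delta(G)\ge t-1$; and $t$-critical graphs are in fact $(t-1)$-edge-connected (a classical result of Dirac), so your separator argument is on firm ground. But the plan genuinely stalls exactly where you say it does, and the gap is worth naming more sharply than you do. Edge-connectivity $t-1$ (even global, even together with $\delta\ge t-1$) does not yield the $\binom{t}{2}$ edge-disjoint linkages needed for a $K_t$-immersion: Mader splitting-off preserves pairwise local edge-connectivity, but to realize a $K_t$ on a chosen branch set $S$ one needs a simultaneous edge-disjoint $T$-linkage for all $\binom{t}{2}$ pairs, and this requires local connectivity of order $t^2$ in general, not $t-1$. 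Moreover, the constructions cited in the paper (Seymour's $K_{12}$ minus four triangles; the DeVos et al.\ and Collins--Heenehan families) show that degree and connectivity a bit above $t-1$ simply do not force $K_t$-immersion, so any proof of the conjecture must use criticality (or chromatic number) in an essential way that goes beyond producing $\delta\ge t-1$. Structural facts about critical graphs such as Gallai's theorem on the low-vertex subgraph or Kostochka--Stiebitz density bounds, which you invoke, give more edges but have not been turned into the kind of dense local configuration that Lemma~\ref{lemma-verydense} or Lemma~\ref{lemma-compmult} exploit in this paper's degree-based argument. In short: your reduction and connectivity observations are correct, your assessment that the plan does not close is also correct, and there is no known way to complete it --- that is precisely why the paper proves $11t+7$ rather than $t-1$.
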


Conjecture~\ref{conjectureimmersions} is known to be true for $t\le 7$; for
$t\le 4$ the arguments are trivial, for $5\le t\le 7$ it was proven by DeVos
et al.\cite{DKMO} and independently, by Lescure and
Meyniel~\cite{lesuremeyniel} (their proof of $t=7$ case is not published). In
all mentioned cases the authors actually prove stronger statements, showing
that only a lower bound on the minimum degree is required to ensure the existence
of the immersion.

Let $f(t)$ be the smallest value such that every graph of minimum degree
at least $f(t)$ contains an immersion of $K_t$. The idea of considering this
function $f(t)$ was first proposed in~\cite{DKMO}, as the natural analogue of
classical results showing that large average degree (equivalently, large
minimum degree) in a graph implies a $K_t$-minor or $K_t$-topological minor containment. To this
end, it is known that average degree $\Omega(t\sqrt{\log{t}})$ in a graph
forces a $K_t$-minor and this bound is tight
(Kostochka~\cite{kostochka} and Thomason~\cite{thommindeg}). Similarly, average degree $\Omega(t^2)$ forces a topological minor of $K_t$,
as proved independently
by Bollob\'as and Thomason~\cite{BoTh} and by Koml\'os and Szemer\'edi~\cite{KoSz};
again, the bound is tight. 

For immersions, it is easy to see that $f(t)\geq t-1$. In~\cite{DKMO}
and~\cite{lesuremeyniel}, the authors proved that $f(t)=t-1$ when $t=5,6,7$;
this implies Conjecture~\ref{conjectureimmersions}, since every $t$-chromatic graph has a subgraph of minimum degree at least $t-1$.  However,
an example due to Paul Seymour showed that this is not true in general; the
graph obtained from the complete graph $K_{12}$ by removing edges of four
disjoint triangles does not contain an immersion of $K_{10}$. DeVos et al.~\cite{mindim}
generalized this construction, giving graphs of minimum degree
$t-1$ and no $K_t$-immersion for $t=10$ and $t\geq 12$. 
Collins and Heenehan~\cite{collinsheenehan} found infinite families of such
examples for all $t\ge 8$.

Nevertheless, we are not aware of any construction showing a larger gap between $f(t)$ and $t$.
Let us remark that if $f(t)\le t$ for some integer $t$, then
Conjecture~\ref{conjectureimmersions} holds for $t$, see~\cite{abu2003graph}.
Hence, we would like to to pose the following problem.
\begin{problem}
Is it true that for every positive integer $t$, all graphs of minimum degree at least $t$ contain $K_t$ as an immersion?
\end{problem}

DeVos et al.~\cite{mindim} gave the first non-trivial upper bound on $f(t)$, showing that
$f(t)\le 200t$.  Actually, they proved this even for \emph{strong immersions},
when paths $P_{uv}$ are internally disjoint from $\varphi(V(H))$, i.e., the paths $P_{uv}$ are not allowed to
use the branch vertices except as endpoints. They also showed that even
stronger statement is true for dense graphs; every graph with $\Omega(n^2)$ edges
contains an immersion of a clique of linear size such that every path of the immersion has exactly one internal
vertex. In this paper we improve on their bound on $f(t)$ as follows.

\begin{theorem}\label{maintheorem}
For every positive integer $t$, if $G$ is a graph with minimum degree at least $11t+7$, then it contains $K_t$ as a strong immersion.
\end{theorem}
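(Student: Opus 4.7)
My approach would be a minimum-counterexample argument. Suppose for contradiction that $G$ is a graph with $\delta(G)\ge 11t+7$ containing no strong $K_t$-immersion, chosen to minimize $|V(G)|$ and then $|E(G)|$. By this choice, every proper subgraph $G'$ of $G$ with $\delta(G')\ge 11t+7$ contains a strong $K_t$-immersion. Minimality yields strong structural constraints: deleting any vertex $v$ must force the minimum degree below $11t+7$, so $v$ has some neighbor of exact degree $11t+7$ in $G$; similarly, removing any edge either drops the minimum degree or produces the immersion.

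The first substantive step would be to translate this criticality into edge-connectivity. Given a vertex partition $(A,B)$ with $|E(A,B)|$ small, I would try to show that, after a small modification such as capping one side with a clique of controlled size, the resulting graph is a smaller counterexample. Iterating yields that $G$ has large edge-connectivity, or at least that between any two vertices there are many edge-disjoint paths in $G$ and, more usefully, in the graph $G-S$ for any small $S$.

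Next I would choose $t$ branch vertices $S=\{v_1,\dots,v_t\}$. For any $t$-subset, each $v_i$ automatically has at least $11t+7-(t-1)=10t+8$ neighbors in $V(G)\setminus S$, so the "local" edges available at each branch vertex are plentiful. The heart of the argument is to route $\binom{t}{2}$ edge-disjoint paths between the pairs of $S$, each internally in $V(G)\setminus S$. I would do this iteratively: to construct the path for the pair $(v_i,v_j)$, I work in the auxiliary graph obtained from $G-(S\setminus\{v_i,v_j\})$ by deleting all previously used edges, and apply a Menger-type argument using the surviving incident edges at $v_i$ and $v_j$ together with the connectivity established for $G-S$. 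The bound $11t+7$ should be exactly what is needed so that, throughout this process, each endpoint retains enough unused incident edges and the remaining graph retains enough global edge-connectivity to admit another path.

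The main obstacle is controlling the cumulative depletion of edge budgets across all $\binom{t}{2}$ routings. Each branch vertex loses one incident edge per path it terminates, and each internal vertex can be visited by multiple paths, consuming two incident edges each time. Making the amortized accounting close up with only $11t+O(1)$ slack---rather than the $200t$ of \cite{mindim}---is where I expect the real technical work to lie. Likely ingredients are a careful ordering of the pairs to be routed, a clever re-selection of the branch set when a local obstruction arises, and/or a potential-function argument tracking the remaining "capacity" at each vertex; with the right choice, the degree slack $10t+8$ at each branch vertex and the connectivity in $G-S$ should together suffice.
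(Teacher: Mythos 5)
There is a genuine gap, and you acknowledge it yourself: the entire content of the proof is in the step where $\binom{t}{2}$ edge-disjoint paths are routed between the chosen branch set with only $11t+O(1)$ degree slack, and you defer this to ``careful amortized accounting,'' ``clever re-selection of the branch set,'' and ``a potential-function argument'' without specifying any of them. The naive iterative Menger scheme you sketch has a concrete obstruction: after routing $k$ paths and deleting their edges, there is no control over the edge-connectivity of the residual graph (you may have deleted a cut), so the Menger bound you would like to invoke for the $(k+1)$-st path is not available. Moreover, since $G$ may have only $O(t)$ vertices while $\Theta(t^2)$ paths must be found, the residual edge budget at the non-branch vertices is tight and the analysis cannot afford paths of uncontrolled length; a potential-function argument bridging this gap would itself be the whole theorem.

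For comparison, the paper avoids this bookkeeping problem entirely by a different mechanism. It first reduces to an Eulerian host graph with nearly minimum degree $d$ (using Win's spanning-tree theorem, an edge-connectivity cut argument, and a near-regularization step driven by Edmonds--Gallai). Then, instead of fixing a branch set and routing paths between its pairs, it tries to \emph{completely split off} vertices one at a time, preserving all remaining degrees. Splitting fails only when the complement of a neighborhood-induced graph has no (near-)perfect matching; Edmonds--Gallai then yields a small set $W$ of vertices sharing many common neighbors with the vertex being split, and this set is accumulated into a growing set $A$. Once $|A|\ge t$, the pairwise common-neighbor count is large enough that a list edge-coloring of $K_t$ (H{\"a}ggkvist--Janssen) gives the strong immersion directly. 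Thus the ``depletion accounting'' you correctly identify as the crux is replaced by a parity/matching argument in which each successful split removes a vertex while exactly preserving degrees, so there is nothing to amortize; and the fallback (accumulating $A$) makes progress toward a separate, purely local density condition rather than toward the global routing you envision.
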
 
The rest of the paper is occupied by the proof of this result. In
Section~\ref{sec:prelim} we present all the preliminary results while
Section~\ref{sec:mainproof} contains the main proof of
Theorem~\ref{maintheorem}.

\section{Preliminary results}
\label{sec:prelim}
In this section we present all the auxiliary results necessary for our proof of
Theorem~\ref{maintheorem}.  

DeVos et al.~\cite{mindim} observed that the complete bipartite graph $K_{t,t}$ contains $K_t$ as a strong immersion
(in fact, a slightly more involved argument shows that $K_{t-1,t-1}$ contains $K_t$ as a strong immersion, which
is the best possible, since a graph containing an immersion of $K_t$ must have at least $t$ vertices of degree at
least $t-1$).  We will use two generalizations of this claim.  Before we state them, let us recall a result on list
edge coloring.

\begin{theorem}[H{\"a}ggkvist and Janssen~\cite{haglist}]\label{thm-list}
For every $n\ge 1$, the line graph of $K_n$ has list chromatic number at most $n$.
\end{theorem}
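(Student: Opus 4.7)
The plan is to apply Alon's Combinatorial Nullstellensatz, which is the standard algebraic tool for list edge-colouring bounds of this flavour. For each edge $e \in E(K_n)$ introduce a variable $x_e$, and form the graph polynomial
\[
P\bigl((x_e)_e\bigr) \;=\; \prod_{\substack{\{e,f\}:\, e\cap f\neq\emptyset\\ e\prec f}} (x_e - x_f),
\]
where $\prec$ is a fixed total order on $E(K_n)$. A proper list edge colouring from lists $L(e)$ of size $n$ exists iff there is an assignment $x_e \in L(e)$ with $P \neq 0$; by the Combinatorial Nullstellensatz this is guaranteed provided $P$ has a nonzero coefficient on some monomial $\prod_e x_e^{d_e}$ with each $d_e \le n-1$. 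A short degree count gives $\deg P = n\binom{n-1}{2}$, so the $d_e$'s must average to $n-2$; such monomials are available as candidates, but there is essentially no slack.

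By the Alon--Tarsi theorem, the required nonzero coefficient exists precisely when one can orient the line graph $L(K_n)$ with all outdegrees at most $n-1$ so that the numbers of even and odd Eulerian sub-orientations of the orientation differ. I would try to build such a $D$ using the natural symmetries of $K_n$: identify $V(K_n)$ with $\{1,\dots,n\}$ and, for two edges $va$ and $vb$ meeting at a common vertex $v$, orient the corresponding edge of $L(K_n)$ by comparing the labels $a$ and $b$. Such a canonical orientation has the correct maximum outdegree by a routine check on stars at each vertex.

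The main obstacle is the Alon--Tarsi parity evaluation: showing the signed count of Eulerian sub-orientations of $D$ is nonzero. I expect this to reduce, after unfolding the definitions, to verifying that a structured determinant or permanent indexed by the edges of $K_n$ is nonzero, which one might check modulo a conveniently chosen prime (a tactic that has worked for related list edge-colouring results on line graphs of complete and complete bipartite graphs). A purely inductive alternative seems to fall short: peeling off one vertex of $K_n$ leaves $n-1$ pendant edges whose lists keep only two usable colours each after the rest of $K_{n-1}$ has been coloured, and a star with size-$2$ lists need not be list colourable. So the extra room afforded by size-$n$ lists really must be exploited algebraically rather than greedily.
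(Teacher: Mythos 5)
This theorem is cited from H\"aggkvist and Janssen and is not proved in the paper, so there is no paper proof to compare against; the proposal has to stand on its own, and as written it does not.

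The general methodological direction (graph polynomial of $L(K_n)$, Combinatorial Nullstellensatz, Alon--Tarsi orientations) is aligned with how H\"aggkvist and Janssen actually argue, so the choice of tool is reasonable. But two of your concrete steps fail. First, the ``canonical orientation'' you propose --- orient the $L(K_n)$-edge between $\{v,a\}$ and $\{v,b\}$ by comparing $a$ and $b$ --- does not have the outdegree bound you claim. Take the edge $\{1,2\}$ of $K_n$: under your rule it beats $\{1,a\}$ for every $a\ge 3$ and beats $\{2,b\}$ for every $b\ge 3$, giving outdegree $2(n-2)$, which exceeds $n-1$ for all $n\ge 4$. So the ``routine check on stars'' you assert is false, and the orientation you offer does not even meet the Alon--Tarsi outdegree hypothesis. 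Second, and more fundamentally, you explicitly stop short of the Alon--Tarsi parity evaluation (the signed count of Eulerian sub-orientations), saying only that you ``expect'' it to reduce to a tractable determinant. That computation \emph{is} the theorem; without it, what you have is a plan, not a proof. A small additional inaccuracy: the target degree vector averages $n-2$ while the cap is $n-1$, so the total slack is $\binom{n}{2}$, not ``essentially none''; this doesn't affect correctness but the framing is misleading.

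Your observation that naive induction fails (peeling a vertex leaves a clique of pendant edges with residual lists of size $2$) is correct and is a good sanity check that the extra structure must be used. But as it stands, the proposal identifies the right family of techniques and then leaves both of the decisive steps --- producing a valid orientation and establishing the nonvanishing of the Alon--Tarsi parity --- either wrong or undone.
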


\begin{lemma}\label{lemma-verydense}
Let $t$ be a positive integer.  Let $A$ and $B$ be disjoint sets of vertices of $G$, with $|A|\ge t$.  If each two non-adjacent vertices in $A$ have at least $t$ common neighbors in $B$,
then $G$ contains $K_t$ as a strong immersion, with all branch vertices contained in $A$.
\end{lemma}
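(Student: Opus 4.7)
The plan is to pick any $t$ vertices $v_1, \ldots, v_t \in A$ as the branch vertices, and then realize the immersion of $K_t$ by using direct edges when available and length-two detours through $B$ otherwise. Concretely, whenever $v_iv_j \in E(G)$, the path $P_{v_iv_j}$ will be the single edge $v_iv_j$; whenever $v_iv_j \notin E(G)$, the path $P_{v_iv_j}$ will have the form $v_i w_{ij} v_j$ for some $w_{ij} \in B$ that is a common neighbor of $v_i$ and $v_j$. Since $B \cap A = \emptyset$, any such paths are automatically internally disjoint from the branch vertices, so the immersion will be strong provided the paths are edge-disjoint.

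The only way two of these paths can share an edge is if two length-two paths $v_iw v_j$ and $v_kwv_\ell$ go through the same $w \in B$ and share a branch-vertex endpoint at $w$; equivalently, the pairs $\{v_i,v_j\}$ and $\{v_k,v_\ell\}$ share a common branch vertex and are both routed through $w$. So the task reduces to choosing, for each non-edge $\{v_i,v_j\}$ among the branch vertices, a vertex $w_{ij} \in B$ from the list $L_{ij}$ of common neighbors of $v_i$ and $v_j$ in $B$, in such a way that whenever two non-edges share a vertex in $\{v_1,\ldots,v_t\}$ they receive different labels. This is precisely a list edge coloring of the ``non-edge'' subgraph $H$ on $\{v_1,\ldots,v_t\}$ whose edges are the non-adjacent pairs, with lists $L_{ij}$ of size at least $t$ by hypothesis.

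To finish, I would observe that $H$ is a subgraph of $K_t$, and so its line graph is an induced subgraph of the line graph of $K_t$; therefore, by Theorem~\ref{thm-list}, the line graph of $H$ has list chromatic number at most $t$. Since each list $L_{ij}$ has size at least $t$, a proper list edge coloring exists, which yields an edge-disjoint family of length-two paths through $B$, one for each non-edge of the branch set. Together with the direct edges used for adjacent pairs of branch vertices, this produces a strong immersion of $K_t$ with branch vertices in $A$.

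The main (and essentially only) subtlety is noticing that the edge-disjointness requirement for the length-two detours is equivalent to a proper edge coloring constraint, so that the Häggkvist--Janssen theorem can be invoked directly; once this reformulation is made, the rest of the argument is immediate. No delicate counting or iterative construction is needed, because the list sizes ($\geq t$) and the chromatic bound ($\leq t$) match exactly.
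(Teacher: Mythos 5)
Your proposal is correct and follows essentially the same approach as the paper: both take $t$ branch vertices from $A$, form the complement graph $H$ of their induced subgraph, and invoke the H\"aggkvist--Janssen list edge coloring theorem (Theorem~\ref{thm-list}) with lists given by the common neighbors in $B$ to route edge-disjoint length-two paths for the non-edges. The paper phrases this via the split-off operation rather than explicitly constructing paths, but the underlying argument and the key lemma invoked are identical.
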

\begin{proof}
Let $A_0=\{v_1, \ldots, v_t\}$ be a subset of $A$ of size $t$.  Let $H$ be the complement of $G[A_0]$.
By Theorem~\ref{thm-list}, there exists a proper edge coloring $\varphi:E(H)\to B$ such that $\varphi(uv)$ is a common neighbor of $u$ and $v$ for each $uv\in E(H)$.
For each $uv\in E(H)$, split off in $G$ the pair $u\varphi(uv), v\varphi(uv)$ of edges, obtaining the edge $uv$.  This results in a graph strongly immersed in $G$ such that $A_0$ induces a clique.
\end{proof}

\begin{lemma}\label{lemma-compmult}
Let $t$ be a positive integer.  Every complete multipartite graph $G$ of minimum degree at least $t$ contains $K_t$ as a strong immersion.
\end{lemma}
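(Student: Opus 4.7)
My plan is to adapt the argument of Lemma~\ref{lemma-verydense} by applying Theorem~\ref{thm-list} part-by-part rather than globally, exploiting the fact that in a complete multipartite graph the non-adjacent pairs inside any vertex set form a disjoint union of cliques, one per part.

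Let $V_1, \dots, V_k$ be the parts of $G$ and $n = |V(G)|$. The hypothesis that the minimum degree is at least $t$ translates into $|V_i| \leq n - t$ for every $i$, and in particular $n \geq t + 1$, so we may pick any $A \subseteq V(G)$ of size $t$; set $B = V(G) \setminus A$ and $a_i = |A \cap V_i|$. For each $i$ with $a_i \geq 2$, the non-edges of $G$ inside $A \cap V_i$ form the edges of a clique $K_{a_i}$, and for each such non-edge the set of common neighbors inside $B$ is precisely $B \setminus V_i$, of size $|B| - (|V_i| - a_i) = (n - t - |V_i|) + a_i \geq a_i$. Theorem~\ref{thm-list} therefore yields a proper edge coloring of this $K_{a_i}$ with colors drawn from $B \setminus V_i$. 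Running this independently for each $i$ gives a map $\varphi$ sending each non-edge $uv$ of $G[A]$ to a common neighbor $\varphi(uv) \in B$.

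We then build the immersion with branch set $A$: each edge of $G[A]$ is used directly, while each non-edge $uv$ of $G[A]$ is replaced by the length-$2$ path with internal vertex $\varphi(uv)$. Strongness is immediate, since $\varphi(uv) \in B$ lies outside $A$. Edge-disjointness across different parts is automatic because the corresponding paths have pairwise disjoint $A$-endpoints; edge-disjointness within a single part is guaranteed by properness of the part-local coloring, which ensures that two non-edges sharing a branch endpoint receive different middle vertices, so the resulting $A$-$B$ edges are distinct.

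The only delicate step is the list-size inequality $|B \setminus V_i| \geq a_i$, and this is exactly the minimum degree hypothesis $|V_i| \leq n - t$. As this inequality holds for every part regardless of which $a_i$'s happen to be large, no case analysis on the sizes of the parts is needed and any $A$ of size $t$ works.
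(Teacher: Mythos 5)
Your proof is correct, and it takes a genuinely different route from the paper's. The paper proceeds by induction on $t$: it splits off the case where the smallest part $V_1$ has size at least $t$ (then $K_{t,t}\subseteq G$), and otherwise applies the induction hypothesis to $G-V_1$ to get a $K_{t-|V_1|}$ immersion, patches in an extra $K_{|V_1|}$ via Lemma~\ref{lemma-verydense} on the bipartite graph between $V_1$ and a fresh set of size $|V_1|$, and then glues the two pieces together with the complete bipartite edges between them. Your argument is direct and non-inductive: you observe that a naive application of Lemma~\ref{lemma-verydense} to an arbitrary $A$ of size $t$ does not go through because a non-adjacent pair in part $V_i$ only has $|B\setminus V_i|$ common neighbors, which may be far less than $t$; but you then exploit the fact that the complement of $G[A]$ is a vertex-disjoint union of cliques $K_{a_i}$, one per part, so Theorem~\ref{thm-list} can be applied to each clique separately with a list size of only $a_i$, and $|B\setminus V_i|\ge a_i$ is exactly the minimum-degree hypothesis. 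Edge-disjointness and strongness are checked correctly: within a part it follows from properness of the coloring, across parts from disjointness of the branch endpoints, and the single edges of $G[A]$ cannot conflict with any $A$--$B$ edge. Your approach is arguably cleaner in that it avoids induction and the size case split, and it makes transparent exactly where the minimum-degree bound enters; the paper's approach, in exchange, reuses Lemma~\ref{lemma-verydense} as a black box rather than reopening its proof.
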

\begin{proof}
We prove the claim by induction on $t$.  For $t=1$, the claim is obviously true.

Let $V_1$, \ldots, $V_k$ be the parts of $G$.  Note that since the minimum degree of $G$ is at least $t$, we have $|V_2|+\ldots+|V_k|\ge t$.
Let $s=|V_1|$.
If $s\ge t$, then $G$ contains $K_{t,t}$ as a subgraph, and thus $G$ contains a strong immersion of $K_t$.
Suppose that $s\le t-1$.  By the induction hypothesis, $G-V_1$ contains $K_{t-s}$ as a strong immersion $\theta_1$.
Since $|V(G-V_1)|\ge t$, there exists a set $B\subset V(G-V_1)$ of size $s$ that does not contain any branch vertex of $\theta_1$.
By Lemma~\ref{lemma-verydense}, the complete bipartite graph between $V_1$ and $B$ contains a strong immersion $\theta_2$ of $K_s$, with
all branch vertices contained in $V_1$.  Then, $\theta_1$ together with $\theta_2$ and edges between $V_1$ and $V(G-V_1)\setminus B$
form a strong immersion of $K_t$ in $G$.
\end{proof}
Let us remark that the lower bound on minimum degree in Lemma~\ref{lemma-compmult} cannot be improved in general, since e.g. the complete $4$-partite graph
with parts of size three is known~\cite{mindim} not to immerse $K_{10}$.

A graph $G$ is \emph{hypomatchable} if $G-v$ has a perfect matching for every $v\in V(G)$.
We will need Edmonds-Gallai theorem on maximum matchings
in the following form (see e.g. Diestel~\cite{Diestel}, Theorem 2.2.3).

\begin{theorem}[Edmonds-Gallai]\label{thm-edmgal}
Every graph $G$ contains a set $T\subseteq V(G)$ and a matching $M$ of size $|T|$
such that each component of $G-T$ is hypomatchable, each edge of $M$ has exactly one
end in $T$, and no two edges of $M$ have end in the same component of $G-T$.
\end{theorem}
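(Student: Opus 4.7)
The statement is the Gallai--Edmonds matching-structure theorem, which I would prove by strong induction on $|V(G)|$, driven by a maximum matching of $G$. The base case $|V(G)|\le 1$ is handled by taking $T=M=\emptyset$.

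For the inductive step, let $M_0$ be a maximum matching of $G$ and define the classical Gallai--Edmonds sets: $D$ is the set of vertices missed by at least one maximum matching of $G$, $A=N_G(D)\setminus D$, and $C=V(G)\setminus(D\cup A)$. A standard augmenting-path analysis of $M_0$ yields: (i)~each component of $G[D]$ is hypomatchable; (ii)~$M_0$ contains a matching saturating $A$ whose edges go to distinct components of $G[D]$; (iii)~$M_0$ restricts to a perfect matching of $G[C]$; and (iv)~no edge joins $C$ to $D$. Assuming these facts, if $D\cup A\neq\emptyset$ then $|V(G[C])|<|V(G)|$, so I apply the inductive hypothesis to $G[C]$ to obtain $(T_C,M_C)$, and set $T:=A\cup T_C$ and $M:=M_0^{AD}\cup M_C$, where $M_0^{AD}$ denotes the $A$--$D$ matching from~(ii). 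Property~(iv) ensures that the components of $G-T$ split cleanly as the components of $G[D]$ (hypomatchable by~(i)) together with the components of $G[C]-T_C$ (hypomatchable by induction); the size-count, the ``exactly one end in $T$'' condition, and the distinct-targeting condition then follow routinely by combining (ii) with the inductive guarantees on $M_C$.

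The main obstacle is the residual case $D=A=\emptyset$, in which $G$ itself has a perfect matching and the recursion on $G[C]=G$ makes no progress. I would handle this by picking any $v\in V(G)$, applying the inductive hypothesis to $G-v$ (strictly smaller and, having odd order, without a perfect matching), and extending the resulting $(T',M')$ to $G$ by setting $T:=T'\cup\{v\}$ and $M:=M'\cup\{vw\}$ for a neighbor $w$ of $v$ chosen to lie outside $V(M')$ and inside a component of $G-v-T'$ not yet hit by $M'$. The delicate point is showing that such a $w$ always exists; this is where the argument becomes technical, and one must either strengthen the inductive hypothesis to provide enough flexibility in $(T',M')$ (using that $G-v$ has Tutte--Berge deficiency exactly one, so there is an ``unhit'' component, and a parity argument against $G$ having only even-order components forces it to be adjacent to $\{v\}\cup T'$), or bypass the difficulty entirely by invoking the Tutte--Berge deficiency formula inside $G$ directly to produce a nontrivial Tutte barrier $S$ with every component of $G-S$ odd and factor-critical, then verifying the matching condition via Hall's theorem applied to the bipartite graph between $S$ and the components of $G-S$.
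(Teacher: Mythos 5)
The paper does not prove this statement; it simply cites it from Diestel (Theorem~2.2.3), so there is no ``paper proof'' to compare against and your proposal has to be judged on its own.

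Your outline has two genuine gaps. First, facts (i)--(iv) --- in particular that every component of $G[D]$ is factor-critical and that $M_0$ matches $A$ into distinct $D$-components --- are not a routine byproduct of ``a standard augmenting-path analysis''; they constitute essentially the full Gallai--Edmonds structure theorem, and asserting them makes the argument circular. Second, and more concretely, the residual case $D=A=\emptyset$ does not go through as sketched. After applying the inductive hypothesis to $G-v$ you get $(T',M')$ with exactly $|T'|+1$ factor-critical components and exactly one unhit component $K$, and you need a neighbour $w$ of $v$ lying in $K$. For an arbitrary admissible $(T',M')$ this can fail: take $G$ on $\{v,u,w,x,y,z\}$ with edges $vu,uw,wx,wy,xy,yz$ (a path $v\!-\!u\!-\!w$ attached to the triangle $wxy$ with a pendant $z$ at $y$). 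Then $G$ has the perfect matching $\{vu,wx,yz\}$, and one legitimate outcome of the induction on $G-v$ is $T'=\{w,y\}$, $M'=\{wu,yz\}$, whose unhit component is the isolated vertex $x$ --- but $v$ is adjacent only to $u$, not to $x$. A different choice ($M'=\{wx,yz\}$) does work, but nothing in your inductive hypothesis lets you make that choice; one would need the additional flexibility property of the Gallai--Edmonds decomposition (that for each component of $D$ there is an $A$-saturating matching avoiding it), which you neither state nor prove. Your alternative suggestion --- take a Tutte--Berge set $S$ maximizing $c_{\mathrm{odd}}(G-S)-|S|$ with $|S|$ maximal, show all components of $G-S$ are odd and factor-critical, and then verify Hall's condition --- is the right idea and in fact is the standard self-contained proof of this Diestel-form statement, but as written it is only a one-line sketch; the factor-criticality step (apply Tutte's theorem inside a bad component and enlarge $S$) and the Hall-condition step need to be carried out, and at that point the entire recursion on $D,A,C$ becomes unnecessary.
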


Graphs without a strong immersion of $K_t$ whose complement neither has a perfect matching
nor is hypomatchable have a special structure, as shown in the following lemma; a somewhat weaker form of this result appears implicitly in~\cite{mindim}.

\begin{lemma}\label{lemma-match}
Let $t$ be a positive integer.
Let $G$ be a graph with $n$ vertices that does not contain a complete multipartite subgraph with minimum degree at least $t$.
Suppose that the complement $\overline{G}$ of $G$ neither has a perfect matching nor is hypomatchable,
and let $T\subseteq V(G)$ be as in Theorem~\ref{thm-edmgal} applied to $\overline{G}$.
There exists a non-empty set $W\subseteq V(G)\setminus T$ such that $|T|\le |W|\le t-1$ and each vertex of $W$ has degree at
least $n-t$ and is adjacent in $G$ to all vertices of $V(G)\setminus (T\cup W)$.
\end{lemma}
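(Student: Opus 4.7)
The plan is to invoke Theorem~\ref{thm-edmgal} on $\overline{G}$, obtaining the set $T$, a matching $M$ of size $|T|$, and hypomatchable components $C_1,\dots,C_k$ of $\overline{G}-T$. Since $\overline{G}$ has no perfect matching, we have $k>|T|$: the matching $M$ extends via the hypomatchability of each $C_i$ to a matching of $\overline{G}$ of size $\tfrac{1}{2}(n+|T|-k)$, and this falls short of $n/2$ precisely when $k>|T|$. Picking one vertex from each $C_i$ produces a clique of size $k$ in $G$---any two such vertices are non-adjacent in $\overline{G}$ and lie outside $T$, hence adjacent in $G$---which is a complete multipartite subgraph of $G$ with minimum degree $k-1$; the hypothesis then forces $k\le t$, and consequently $|T|\le k-1\le t-1$.

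I next observe that the adjacency condition on $W$ forces $W$ to be a union of components of $\overline{G}-T$: any $\overline{G}$-neighbour of a vertex $w\in W$ that lies outside $T$ must itself belong to $W$ (else $w$ fails to be adjacent in $G$ to something in $V(G)\setminus(T\cup W)$), and iterating along connected components of $\overline{G}-T$ yields the closure. Moreover, every singleton component $\{v\}$ of $\overline{G}-T$ has $\deg_{\overline{G}}(v)\le|T|\le t-1$, whence $\deg_G(v)\ge n-t$ automatically, and $v$ is $G$-adjacent to every other vertex of $V(G)\setminus T$. Thus each singleton can be freely included in $W$, and the degree condition on non-singleton components becomes the only non-trivial constraint.

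It remains to assemble a union $W$ of components with $\max(|T|,1)\le|W|\le t-1$ in which every vertex has $\overline{G}$-degree at most $t-1$. When the number of singleton components of $\overline{G}-T$ is already at least $\max(|T|,1)$, I would pick $W$ to be any suitably sized subset of them and be done. The harder case is when too few singletons exist; here one uses hypomatchability: within each non-singleton component $C$ one can find a pair of vertices non-adjacent in $G$ (an edge of $\overline{G}[C]$), and combining such pairs across the non-singleton components together with the available singletons yields a complete multipartite subgraph of $G$ of the form $K_{2,\dots,2,1,\dots,1}$. The main obstacle of the proof is pushing this construction far enough: converting the hypothetical absence of a valid $W$ into a complete multipartite subgraph of $G$ of minimum degree at least $t$, which would contradict the lemma's hypothesis. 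This requires a careful extraction of vertex-disjoint non-edges from the hypomatchable components---using for instance near-perfect matchings in each $\overline{G}[C_i]$---and arranging them into a multipartite subgraph that respects the edges of $G$ between different components and to $T$.
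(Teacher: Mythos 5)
Your first two observations are correct and useful: $k>|T|$ and $k\le t$ follow from the Edmonds--Gallai structure and the hypothesis, and any valid $W$ must indeed be a union of components of $\overline{G}-T$. However, your proof then stalls, and you acknowledge as much: the final paragraph describes an unfinished plan built around singleton components and near-perfect matchings inside the hypomatchable pieces, and explicitly flags ``the main obstacle'' as still open. That obstacle is real --- trying to manufacture a $K_{2,\dots,2,1,\dots,1}$ from non-edges inside the $C_i$'s does not obviously produce a complete multipartite subgraph of $G$ of minimum degree $t$, because you have no control over how the two chosen vertices in $C_i$ relate to vertices in $T$.

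The idea you are missing is to use the \emph{entire} components as parts, not one vertex per component. Since any two vertices in distinct components of $\overline{G}-T$ are $G$-adjacent, $G$ contains a complete multipartite subgraph $G'$ with parts $V(C_1),\dots,V(C_k)$. By hypothesis $G'$ has minimum degree less than $t$, and since the degree of a vertex of $V(C_i)$ in $G'$ is $\sum_{j\ne i}|V(C_j)|$, it follows (after relabelling) that $|V(C_1)|+\dots+|V(C_{k-1})|\le t-1$. Setting $W=\bigcup_{i=1}^{k-1}V(C_i)$ immediately gives $|W|\le t-1$ and $|W|\ge k-1\ge|T|$; each $w\in W$ is $G$-adjacent to all of $V(C_k)=V(G)\setminus(T\cup W)$ by construction, and is $G$-adjacent to at least $k-2\ge|T|-1$ vertices within $W$ (those in the other components $C_i\subseteq W$), whence $\deg_G w\ge (n-|T|-|W|)+(|T|-1)=n-|W|-1\ge n-t$. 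Your one-vertex-per-component clique only captures the bound $k\le t$, which is strictly weaker information and forces the awkward case analysis you were unable to close.
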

\begin{proof}
Let $C_1$, \ldots, $C_k$ be the components of $\overline{G}-T$.  
Since the complement of $G$ neither has a perfect matching nor is hypomatchable, we have $k\ge 2$ and $k>|T|$.
Note that $G$ contains a complete multipartite subgraph $G'$ with parts $V(C_1)$, \ldots, $V(C_k)$, and by the assumptions, its
minimum degree is less than $t$.  By symmetry, we can assume that vertices of $V(C_k)$ have degree at most $t-1$ in $G'$,
and thus $|V(C_1)|+\ldots+|V(C_{k-1})|\le t-1$.  Let $W=\bigcup_{i=1}^{k-1} V(C_i)$.

Note that $|W|\ge k-1\ge |T|$.  Each vertex of $W$ is adjacent to all vertices of $V(C_k)=V(G)\setminus (T\cup W)$.
Also, it is adjacent to all vertices of all but one component of $\overline{G}-T$ contained in $W$, i.e., it has degree
at least $k-2\ge |T|-1$ in $G[W]$.  Hence, each vertex of $W$ has degree at least $n-|T|-|W|+(|T|-1)=n-|W|-1\ge n-t$.
\end{proof}

As a first step towards proving Theorem~\ref{maintheorem}, we aim to reduce the problem to Eulerian graphs
(i.e., graphs with only even degree vertices); this is convenient, since even degree vertices can be completely
split off in the process of finding an immersion.  DeVos et al.~\cite{mindim} showed that a graph of minimum degree at least $2d$
contains an Eulerian subgraph of minimum degree at least $d$.  To avoid losing half of the degree in this preprocessing step,
we use a somewhat more involved construction (Lemma~\ref{lemma-eul}) that allows us to only decrease the minimum degree by $6$.

To eliminate vertices of odd degree, we apply the following well-known fact.

\begin{lemma}\label{lemma-parity}
Let $T$ be a tree, and let $f:V(T)\to\{0,1\}$ be arbitrary function such that $\sum_{v\in V(T)} f(v)$ is even.
Then there exists a forest $T'\subseteq T$ such that every vertex $v$ satisfies $\deg_{T'} v\equiv f(v)\pmod{2}$.
\end{lemma}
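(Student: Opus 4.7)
The plan is to prove Lemma~\ref{lemma-parity} by induction on $|V(T)|$. The base case $|V(T)|=1$ is immediate: the parity assumption forces $f(v)=0$ for the unique vertex $v$, so the empty forest $T'=\emptyset$ works.

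For the inductive step, I would pick any leaf $v$ of $T$ with unique neighbor $u$, and split into two cases according to $f(v)$. If $f(v)=0$, delete $v$ from $T$; the parity of $\sum f$ is preserved on $T-v$, and by induction there is a forest $T'_1\subseteq T-v$ realizing the required degree parities there. Then $T'=T'_1$ also works in $T$, since $\deg_{T'}v=0\equiv f(v)\pmod 2$ and the parities at other vertices are unchanged. If $f(v)=1$, I would instead define $f'$ on $V(T-v)$ by $f'(u)=1-f(u)$ and $f'(w)=f(w)$ for $w\neq u$. The sum $\sum_{w\in V(T-v)} f'(w)$ differs from $\sum_{w\in V(T)} f(w)$ by $-f(v)+(f'(u)-f(u))=-1\pm 1$, which is even, so the parity hypothesis is preserved. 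Induction yields a forest $T'_1\subseteq T-v$ with $\deg_{T'_1}w\equiv f'(w)\pmod 2$, and I set $T'=T'_1\cup\{uv\}$. Then $\deg_{T'}v=1\equiv f(v)$, $\deg_{T'}u=\deg_{T'_1}u+1\equiv f'(u)+1=f(u)\pmod 2$, and parities at remaining vertices are unaffected.

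There is no genuine obstacle here; the lemma is a standard fact about $T$-joins specialized to trees, and the only subtle point is the parity bookkeeping when the leaf edge $uv$ is forced into $T'$, which is handled precisely by the toggle of $f(u)$. Induction terminates since $|V(T)|$ strictly decreases at each step, and the resulting $T'$ is automatically a forest because it is a subgraph of the tree $T$.
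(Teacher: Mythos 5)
Your proof is correct. The paper cites Lemma~\ref{lemma-parity} as a ``well-known fact'' and does not supply a proof of its own, so there is nothing in the text to compare against; your inductive argument (peel off a leaf $v$, and when $f(v)=1$ force the leaf edge $uv$ into $T'$ while toggling $f(u)$ to preserve the parity of the sum) is the standard one and is carried out correctly, including the parity bookkeeping in both cases.
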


Hence, to achieve our goal it would suffice to find a spanning tree with bounded maximum degree.  A sufficient condition
for the existence of such a spanning tree was found by Win~\cite{win1989connection}.
Let $c(G)$ denote the number of components of a graph $G$. 

\begin{theorem}[Win~\cite{win1989connection}]\label{thm-win}
Let $k\ge 2$ be an integer.  If every $S\subseteq V(G)$ satisfies $c(G-S)\le (k-2)|S|+2$,
then $G$ has a spanning tree of maximum degree at most $k$.
\end{theorem}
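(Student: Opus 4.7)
I would prove this by contradiction, via an edge-swap on an extremally chosen spanning tree. The hypothesis with $S = \emptyset$ yields $c(G) \le 2$, and a brief case analysis handles the disconnected case, so I may assume $G$ is connected. Choose a spanning tree $T$ whose non-increasing degree sequence $(d_1, d_2, \ldots)$ is lexicographically minimum, and suppose for contradiction that $d_1 \ge k+1$. Fix a vertex $v$ with $\deg_T v = d_1$ and let $T_1, \ldots, T_{d_1}$ be the subtrees of $T - v$, where $v_i$ denotes the neighbour of $v$ in $T_i$.

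The key step is to exhibit a non-tree edge $xy \in E(G) \setminus E(T)$ with $x \in T_i$, $y \in T_j$ ($i \ne j$) such that $\deg_T x, \deg_T y \le k - 1$. The unique cycle of $T + xy$ passes through $v$ (since $v$ separates $T_i$ from $T_j$), so deleting the tree edge $vv_i$ from $T + xy$ produces a spanning tree $T'$ in which $\deg_{T'} v = d_1 - 1$, while $\deg_{T'} x \le k \le d_1 - 1$ and $\deg_{T'} y \le k \le d_1 - 1$. Every other degree either decreases or is unchanged, so $T'$ strictly lex-beats $T$, contradicting the choice of $T$.

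To find the edge, apply the hypothesis to $S := \{v\} \cup \{u : \deg_T u \ge k\}$. Each $T_i$ contains a leaf of $T$; such a leaf has $T$-degree $1 < k$ and therefore lies in $V(T_i) \setminus S$, so $V(T_i) \setminus S$ is nonempty for every $i$. If no edge of the desired kind existed, then $G - S$ would contain no edges between different $T_i$'s, so it would decompose as $\bigsqcup_i G[V(T_i) \setminus S]$, giving $c(G - S) \ge d_1 \ge k + 1$. When $|S| = 1$ (equivalently, when $v$ is the only vertex of $T$-degree $\ge k$), this immediately contradicts $c(G - S) \le (k-2)|S| + 2 = k$.

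The main obstacle is the case when the set of other heavy vertices is nonempty, since the upper bound $(k-2)|S|+2$ then allows $c(G-S)$ to exceed $d_1$ without contradiction. Win's argument handles this by refining the minimality notion on $T$ (for instance, tie-breaking with a convex potential such as $\sum_u \max(0, \deg_T u - k)^2$) or by performing swaps in parallel across all heavy vertices, so that every admissible swap strictly decreases the chosen invariant. Verifying that the hypothesis supplies enough crossing edges to neutralize every heavy vertex at once — without creating new ones elsewhere — is the technical core of the proof, and is where one must exploit the full strength of the coefficient $k-2$ (rather than a weaker multiple of $|S|$) in the hypothesis.
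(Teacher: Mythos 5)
The paper does not prove this theorem; it is imported as a black box from Win~\cite{win1989connection}, so there is no in-paper argument to compare against, and I am judging the proposal on its own.

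The proposal is incomplete — you say as much in your final paragraph — and there are two distinct issues. The first is a fixable miscount that actually removes the obstacle you worry about. You estimate $c(G-S)$ from below by the number of components of $T-v$, namely $d_1$, and then observe that $(k-2)|S|+2$ may exceed $d_1$. But the right quantity is the number of components of the forest $T-S$. Since $T[S]$ is a forest on $|S|$ vertices, it has at most $|S|-1$ edges, so the number of $T$-edges incident with $S$ is at least $\sum_{u\in S}\deg_T u-(|S|-1)$, which gives
\[
c(T-S)\;\ge\;\sum_{u\in S}\deg_T u-2|S|+2\;\ge\;\bigl(k|S|+1\bigr)-2|S|+2\;=\;(k-2)|S|+3,
\]
where the middle inequality uses $\deg_T v\ge k+1$ and $\deg_T u\ge k$ for the other members of $S$. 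This strictly exceeds the hypothesis bound $(k-2)|S|+2\ge c(G-S)$ regardless of $|S|$, and since every component of $T-S$ is connected in $G-S$, the strict inequality $c(G-S)<c(T-S)$ forces an edge $xy\in E(G)$ with $x,y\notin S$ lying in distinct components of $T-S$; automatically $\deg_T x,\deg_T y\le k-1$. So a crossing edge always exists, and the case $|S|>1$ is not in itself a problem.

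The second issue is the genuine gap. The edge $xy$ you obtain separates $x$ and $y$ in $T-S$, not in $T-v$, so the fundamental cycle of $T+xy$ need not pass through $v$ at all; it merely passes through some $w\in S$. If every $S$-vertex on that cycle has $\deg_T w=k$ exactly, the swap sends $w$ to degree $k-1$ while raising $x$ and $y$ to degree at most $k$, and neither the lexicographic degree sequence nor a convex potential such as $\sum_u\max(0,\deg_T u-k)^2$ is guaranteed to decrease. Your closing paragraph acknowledges that this is where the real work lies and offers candidate potentials and "parallel swaps" without verifying that any of them strictly decreases under every forced swap. Until a concrete monovariant is exhibited and checked — including the bookkeeping for the second endpoint of the deleted tree edge, which you omit — the argument does not terminate, and the proof is not complete.
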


To apply this result, we need to deal with graphs such that removal of a small number of vertices creates many components;
such graphs either have small connectivity or contain vertices of large degree, and the two following lemmas address these
possibilities.  In a graph $G$, for any $X\subseteq V(G)$ we denote
by $\partial{X}$ the set of edges having exactly one endpoint in $X$.  

\begin{lemma}\label{lemma-connsg}
For every even positive integer $d$ and a graph $G$ of minimum degree at least $d$, there exists $X\subseteq V(G)$
such that $|\partial X|<d$ and $G[X]$ is $(d/2)$-edge-connected.
\end{lemma}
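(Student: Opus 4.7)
The plan is to take $X\subseteq V(G)$ to be a non-empty set of minimum cardinality satisfying $|\partial X|<d$, and then show that the $(d/2)$-edge-connectivity of $G[X]$ follows automatically from the minimality. Such a set exists, since $X=V(G)$ has $|\partial X|=0<d$. I would first observe that $X$ cannot be a singleton $\{v\}$, because then $|\partial X|=\deg_G(v)\ge d$ by the hypothesis on minimum degree, contradicting $|\partial X|<d$; so $|X|\ge 2$.

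Next, I would suppose for contradiction that $G[X]$ has an edge cut of size less than $d/2$, arising from a partition $X=A\cup B$ into non-empty parts. Pigeonholing on the $|\partial X|<d$ edges leaving $X$, at least one side—say $A$—has at most $|\partial X|/2<d/2$ edges going to $V(G)\setminus X$. Splitting the edges leaving $A$ in $G$ into those going to $B$ (fewer than $d/2$ by assumption) and those going to $V(G)\setminus X$ (fewer than $d/2$ by the choice of $A$) yields $|\partial A|<d$. Since $A$ is non-empty and $A\subsetneq X$, this contradicts the minimality of $X$, completing the proof.

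The argument is essentially a single extremal-set choice combined with symmetrization of a hypothetical small cut against the existing boundary, and the only subtle point is to calibrate the extremal condition correctly—namely minimizing $|X|$ subject to $|\partial X|<d$ rather than some other variant—so that the small cut inside $G[X]$ can be merged with the boundary without exceeding $d$. There is no substantive obstacle beyond picking this right quantity.
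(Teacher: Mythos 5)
Your proof is correct and takes essentially the same approach as the paper: choose $X$ to be a minimum non-empty set with $|\partial X|<d$ and use minimality to rule out a sub-$(d/2)$ cut in $G[X]$. The paper bounds $|\partial X_1|+|\partial X_2|<|\partial X|+2\cdot(d/2)<2d$ and then pigeonholes, whereas you pigeonhole on $\partial X$ first and then bound $|\partial A|$ directly; this is the same computation performed in a slightly different order.
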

\begin{proof}
The claim is trivial if $G$ is $d$-edge-connected.  Otherwise, let $X$ be a smallest non-empty set of vertices
of $G$ such that $|\partial X|<d$.  If $G[X]$ were not $(d/2)$-edge-connected, there would exist a partition
of $X$ to non-empty subsets $X_1$ and $X_2$ such that there are less than $d/2$ edges with one end in $X_1$ and
the other end in $X_2$.  However, then $|\partial X_1|+|\partial X_2|<|\partial X|+2\cdot\frac{d}{2}<2d$, and thus
either $|\partial X_1|<d$ or $|\partial X_2|<d$, contradicting the minimality of $X$.
\end{proof}

\begin{lemma}\label{lemma-nearreg}
Let $t$ and $d$ be positive integers.  If $G$ is a graph of minimum degree at least $d$ and $G$ does not contain $K_t$
as a strong immersion, then $G$ contains as a strong immersion a graph of minimum degree at least $d-1$ and maximum degree at most $d+t$.
\end{lemma}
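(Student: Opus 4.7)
The plan is to take $H$ to be an edge-minimal subgraph of $G$ satisfying $\delta(H)\ge d-1$, i.e.\ $|E(H)|$ minimized subject to the minimum-degree constraint. Since $H$ is a subgraph of $G$, the identity map already witnesses that $H$ is strongly immersed in $G$, so it suffices to show $\Delta(H)\le d+t$. Edge-minimality guarantees that for every $uv\in E(H)$ at least one endpoint has degree exactly $d-1$ in $H$; otherwise $H-uv$ would still satisfy the minimum-degree constraint, contradicting minimality.

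Suppose for contradiction that $\Delta(H)>d+t$, and let $v\in V(H)$ attain $\deg_H(v)>d+t$. Since $\deg_H(v)\ge d>d-1$, edge-minimality forces every $u\in N_H(v)$ to satisfy $\deg_H(u)=d-1$. Writing $N=N_H(v)$, this yields $\deg_{H[N]}(u)\le d-2$ for each $u\in N$ (one of $u$'s $d-1$ neighbors is $v$, which lies outside $N$), and therefore $\deg_{\overline{H[N]}}(u)\ge |N|-d+1>t+1$. The aim is now to derive a contradiction from this local structure around $v$.

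The key step is to apply Lemma~\ref{lemma-match} to $H[N]$. First, $H[N]$ cannot contain a complete multipartite subgraph of minimum degree at least $t$, since Lemma~\ref{lemma-compmult} would then yield a $K_t$ strong immersion inside $H[N]\subseteq G$, contradicting the hypothesis on $G$. Consequently, if $\overline{H[N]}$ neither has a perfect matching nor is hypomatchable, Lemma~\ref{lemma-match} produces a set $W\subseteq N$ in which every vertex has degree at least $|N|-t>d-1$ in $H[N]$, contradicting the bound $\deg_{H[N]}(w)\le d-2$. Hence $\overline{H[N]}$ must admit a perfect matching or be hypomatchable.

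The perfect-matching case is handled by completely splitting off $v$: given a perfect matching $M$ in $\overline{H[N]}$, the graph $H':=(H-v)+M$ is simple (matching edges are non-edges of $H$, so no parallel edges are created) with $\delta(H')\ge d-1$, and it is strongly immersed in $H$ because each edge $uu'\in M$ corresponds to the length-two path $uvu'$ whose only internal vertex $v$ has been removed from $V(H')$. As $|V(H')|<|V(H)|$, after re-establishing edge-minimality on $H'$ one may iterate, and the process terminates in a bounded number of steps. The main obstacle is the hypomatchable case, where $|N|$ is odd and a single edge $vw_0$ cannot be paired off; I would try to circumvent it either by arguing that the strong lower bound $\deg_{\overline{H[N]}}(u)>t+1$ combined with factor-criticality of $\overline{H[N]}$ already forces the forbidden $K_t$ strong immersion, or by choosing $w_0$ to have a neighbor outside $N\cup\{v\}$ and performing a single compensating local modification to absorb the leftover edge while maintaining $\delta\ge d-1$.
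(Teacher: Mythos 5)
Your setup is close in spirit to the paper's, but the hypomatchable case---which you flag as unresolved---is a genuine gap, and neither of your two suggested circumventions closes it. The first (factor-criticality of $\overline{H[N]}$ together with $\deg_{\overline{H[N]}}(u) > t+1$ forcing a $K_t$ strong immersion) does not obviously hold: high minimum degree in the complement of the neighborhood gives no direct route to a clique immersion, and nothing in Lemma~\ref{lemma-compmult} or Lemma~\ref{lemma-verydense} applies there. The second (absorbing the leftover edge via a ``compensating local modification'') is the right instinct but is not a proof, and there is a structural reason it cannot be patched within your framework: you start from an edge-minimal $H$ with $\delta(H)\ge d-1$, which is \emph{already at the target bound}, so any split-off in a hypomatchable neighborhood drops some vertex to degree $d-2$ and violates the minimum-degree constraint. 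Worse, if you then re-minimize and iterate, the same vertex can end up being ``sacrificed'' by several different high-degree vertices whose neighborhoods overlap, so degrees can drop by more than one.

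The paper avoids both problems with two ideas you are missing. First, it starts from $\delta(G)\ge d$ (not $d-1$), which leaves exactly one unit of slack to spend when the neighborhood complement is hypomatchable. Second---and this is the key step---it pre-selects, via Hall's theorem, an \emph{injective} map $g:S\to V(G)$ with $vg(v)\in E(G)$ for each high-degree vertex $v\in S$ (Hall's condition holds because each $v\in S$ has degree $\ge d+t+1$ while all its neighbors have degree exactly $d$). In the hypomatchable case, the matching is taken in $\overline{H}-g(v)$, so only $g(v)$ loses a unit of degree; injectivity of $g$ guarantees no vertex is sacrificed twice, so the final minimum degree is still at least $d-1$. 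You would need to incorporate both the $d$-versus-$(d-1)$ slack and the Hall/system-of-distinct-representatives argument to make your approach go through. Your derivation that $\overline{H[N]}$ has a perfect matching or is hypomatchable (via Lemma~\ref{lemma-match} and Lemma~\ref{lemma-compmult}) and your handling of the perfect-matching case are correct and match the paper.
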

\begin{proof}
Without loss of generality, we can assume that for every $uv\in E(G)$, either $\deg u=d$ or $\deg v=d$, as otherwise we can remove the edge $uv$.
Let $S$ denote the set of vertices of $G$ of degree at least $d+t+1$, and note that $S$ is an independent set in $G$, by our previous observation.

Consider any $S'\subseteq S$, and let $Y$ be the set of vertices of $G$ adjacent to a vertex in $S'$.  We claim that $|Y|\ge |S'|$:
indeed, the number of edges with one end in $S'$ and the other end in $Y$ is at least $(d+t+1)|S'|$, and at most $d|Y|$.
Hence, Hall's theorem implies that there exists an injective function $g:S\to V(G)$ such that $vg(v)\in E(G)$ for all $v\in S$.

Consider each vertex $v\in S$ in turn, and let $H$ be the subgraph induced by its neighbors.
Each vertex of $H$ has degree at most $d<|V(H)|-t$.  By Lemma~\ref{lemma-match}, we conclude that the complement $\overline{H}$ of $H$ either has perfect matching or is hypomatchable.
In the former case, let $M$ be the perfect matching in $\overline{H}$; in the latter case, let $M$ be the perfect matching in $\overline{H}-g(v)$.
We remove $v$ and add $M$ to the edge set of $G$.  Note that the resulting graph is strongly immersed in $G$.

This way, we eliminated all vertices of degree greater than $d+t$, while we only decreased degrees of vertices of $g(S)$ by one.  It follows that the resulting graph
has minimum degree at least $d-1$ and maximum degree at most $d+t$.
\end{proof}

\begin{lemma}\label{lemma-eul}
Let $t$ be a positive integer and let $d\ge 2t+12$ be an even integer.  If $G$ is a graph of minimum degree at least $d+6$ that does not contain $K_t$ as a strong immersion, then
$G$ contains as a strong immersion an Eulerian graph $G'$ such that $\sum_{v\in V(G')} \max (0, d-\deg v)<d$.
\end{lemma}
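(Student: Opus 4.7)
The plan is to reduce to a subgraph of bounded maximum degree via Lemma~\ref{lemma-nearreg}, carve out a highly edge-connected ``core'' using Lemma~\ref{lemma-connsg}, find a spanning tree of the core with small maximum degree through Theorem~\ref{thm-win}, and finally parity-correct the odd-degree vertices by deleting edges of an appropriate subforest obtained from Lemma~\ref{lemma-parity}. The boundary cost of extracting the core will exactly match the allowed degree deficit $< d$.

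First I would apply Lemma~\ref{lemma-nearreg} to $G$ (which has minimum degree at least $d+6$ and no strong immersion of $K_t$) to obtain a graph $G_1$ strongly immersed in $G$ with $\delta(G_1) \ge d+5$ and $\Delta(G_1) \le (d+6)+t$. Since $d$ is even and $\delta(G_1) \ge d$, Lemma~\ref{lemma-connsg} applied to $G_1$ yields a set $X\subseteq V(G_1)$ with $|\partial X| < d$ such that $H \colonequals G_1[X]$ is $(d/2)$-edge-connected. Let $b(v)$ denote the number of edges of $\partial X$ incident with $v \in X$, so that $\sum_{v\in X} b(v) = |\partial X| < d$ and $\deg_H(v) \ge \deg_{G_1}(v) - b(v) \ge d+5-b(v)$.

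Next I would bound the number of components of $H-S$. For any $S\subseteq V(H)$, each component of $H-S$ sends at least $d/2$ edges to $S$ (by edge-connectivity), so $(d/2)\cdot c(H-S) \le |S|\cdot \Delta(H) \le |S|(d+t+6)$. Since $d \ge 2t+12$, this gives $c(H-S) \le 3|S|$, and as $H$ is connected, also $c(H) = 1 \le 2$. Thus Theorem~\ref{thm-win} applies with $k=5$, yielding a spanning tree $T$ of $H$ with $\Delta(T) \le 5$. Now define $f\colon V(T)\to\{0,1\}$ by $f(v) \equiv \deg_H(v) \pmod{2}$; the sum $\sum_v f(v)$ is even by the handshake lemma, so Lemma~\ref{lemma-parity} produces a forest $T'\subseteq T$ with $\deg_{T'}(v) \equiv f(v) \pmod{2}$ and $\deg_{T'}(v) \le 5$ for every $v$. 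Set $G' = H - E(T')$. Then every vertex of $G'$ has even degree (so $G'$ is Eulerian in the paper's sense), and $G'$ is strongly immersed in $G$ as a subgraph of $G_1$. For the deficit bound, $\deg_{G'}(v) = \deg_H(v) - \deg_{T'}(v) \ge (d+5-b(v)) - 5 = d - b(v)$, so $\max(0, d-\deg_{G'}(v)) \le b(v)$ and summation gives $\sum_v \max(0, d-\deg_{G'}(v)) \le |\partial X| < d$, as required.

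The main subtlety is balancing the parameters: Lemma~\ref{lemma-nearreg} costs one in minimum degree while pushing the maximum degree up by $t$, and the parity correction will shave off another five from every degree; the hypothesis $d\ge 2t+12$ is precisely what is needed both to absorb these losses and to ensure $\Delta(H)/(d/2) \le 3$, which in turn lets Win's theorem produce a spanning tree of maximum degree only $5$. The final accounting works because the only vertices that can cause a degree deficit are those incident to the (small) boundary $\partial X$, and each such vertex loses at most $b(v)$ beyond what the parity correction already removes.
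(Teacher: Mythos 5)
Your proof is correct and follows essentially the same route as the paper's: apply Lemma~\ref{lemma-nearreg} to bound the maximum degree, extract a $(d/2)$-edge-connected core $H=G_1[X]$ via Lemma~\ref{lemma-connsg}, use Win's theorem (Theorem~\ref{thm-win}) to find a spanning tree of maximum degree at most $5$, and parity-correct with Lemma~\ref{lemma-parity}. The only cosmetic difference is that you track the boundary incidences $b(v)$ explicitly, which makes the final accounting $\sum_v \max(0,d-\deg_{G'}v)\le\sum_v b(v)=|\partial X|<d$ a bit more transparent than the paper's terser statement of the same bound.
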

\begin{proof}
By Lemma~\ref{lemma-nearreg}, $G$ contains as a strong immersion a graph $G_1$ of minimum degree at least $d+5$ and maximum degree at most $d+t+6$.
By Lemma~\ref{lemma-connsg}, there exists $X\subseteq V(G_1)$ such that $|\partial X|<d$ and $G_1[X]$ is $(d/2)$-edge-connected.
Let $G_2=G_1[X]$.

We claim that $G_2$ has a spanning tree of maximum degree at most $5$.  Indeed, it suffices to verify the assumptions of Theorem~\ref{thm-win}.
Consider any non-empty $S\subseteq X$.  The number of edges with exactly one end in $S$ is at most $|S|(d+t+6)$.
On the other hand, $G_2$ is $(d/2)$-edge-connected, and thus the number of such edges is at least $c(G_2-S)d/2$.  It follows that $c(G_2-S)\le \frac{2(d+t+6)}{d}|S|\le 3|S|$.

Let $T$ be a spanning tree of $G_2$ of maximum degree at most $5$.  By Lemma~\ref{lemma-parity}, there exists $T'\subseteq T$ such that the parity of the degree of each vertex
is the same in $T'$ and in $G_2$.  Hence, $G'=G_2-E(T')$ is an Eulerian graph.  Since $G_1$ has minimum degree at least $d+5$, each vertex of $G'$ except for those incident
with the edges of $\partial X$ has degree at least $d$, and $\sum_{v\in V(G')} \max (0, d-\deg v)\le |\partial X|<d$.
\end{proof}

\section{The Proof of Theorem~\ref{maintheorem}}
\label{sec:mainproof}

Let us first give a brief outline of the proof.  We try to split off the vertices of the graph $G$ one by one, preserving
the degrees of all other vertices and not creating any parallel edges or loops.  This is not possible if the complement of the subgraph
induced by the neighborhood of the considered vertex $a$ does not have a perfect matching; in this case, Lemma~\ref{lemma-match} implies
that $G$ contains vertices that have many common neighbors with $a$.  Let $A$ denote the set of such vertices.  Next, we try to split
off completely all the vertices in $A$, and again, we only fail when some further vertices have many neighbors in common with the vertices of $A$.
Thus, we include these vertices in $A$, and repeat the process.  Eventually, we include at least $t$ vertices in $A$, at which point Lemma~\ref{lemma-verydense}
applies and gives a strong immersion of $K_t$.

This overall structure of the proof is inspired by the proof of DeVos et al.~\cite{mindim}.  The main difference is that in their approach, they allow
splitting only a part of the vertices of $A$, and thus the size of $A$ may increase and decrease throughout the argument.  The termination is ensured
by the fact that splitting the vertices of $A$ increases the density of the graph induced by the common neighbors of $A$, which eventually makes
it possible to find a strong immersion of $K_t$ using another argument specific to very dense graphs.  Avoiding this step enables us to significantly lower
the multiplicative constants (at the expense of a somewhat more complicated analysis of the numbers of common neighbors of vertices of $A$).

\begin{definition}\label{def-state}
Let $t\ge 1$ and $d\ge 11t$ be integers.
A \emph{$(t,d)$-state} is a triple $T=(G,A,B)$, where $G$ is an Eulerian graph such that $\sum_{v\in V(G)} \max (0, d-\deg v)<d$, and
$A\neq\emptyset$ and $B$ are disjoint subsets of vertices of $G$, satisfying the following conditions:
\begin{itemize}
\item[\textrm{(i)}] $d-|A|\le |B|\le d$, and
\item[\textrm{(ii)}] there exists an ordering $a_1$, \ldots, $a_p$ of the elements of $A$ such that
for $1\le i\le p$, the vertex $a_i$ is adjacent to all but at most $|A|+2i$ vertices of $B$.
\end{itemize}
\end{definition}

We say that a set $A\subseteq V(G)$ is \emph{splittable} if there exists a graph $G'$ with vertex set $V(G)\setminus A$
that is strongly immersed in $G$, such that $\deg_{G'} v=\deg_G v$ for every $v\in V(G')$.
A \emph{near-matching} is a graph of maximum degree two, and its \emph{center} is the set of its vertices of degree two.
Let us now formulate the main part of our argument, making precise the claims from the first paragraph of this section.

\begin{lemma}\label{lemma-nosplit}
Let $t\ge 1$ and $d\ge 11t$ be integers.
Let $T=(G,A,B)$ be a $(t,d)$-state, where $G$ does not contain $K_t$ as a strong immersion.  If $A$ is not splittable and $|A|\le t-1$, then there exists a $(t,d)$-state $T=(G,A',B')$
such that $A\subsetneq A'$ and $|A'|<|A|+t$.
\end{lemma}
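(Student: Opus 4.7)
My plan is to attempt a splitting of $A$ one vertex at a time and, when the attempt must fail because $A$ is not splittable, to extract via Lemma~\ref{lemma-match} a small set of vertices $W$ having many common neighbours with some $a_i\in A$, and to adjoin $W$ to $A$ in order to form $A'$.

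Since $G$ is Eulerian, every $a\in A$ has even degree, so splitting off a vertex $a$ amounts to choosing a perfect matching of $\overline{G}[N(a)]$. Under the simplifying assumption that $A$ is independent in~$G$ (the general case being reduced to this by organising the edges of $G[A]$ into a near-matching, as the definition introduced just before the lemma suggests), a splitting of $A$ is a family of pairwise edge-disjoint perfect matchings $M_1,\ldots,M_p$ with $M_i\subseteq\overline{G}[N(a_i)]$ whose union avoids $E(G)$. I would build such matchings greedily in the order $a_1,\ldots,a_p$ provided by condition~(ii); non-splittability forces the procedure to halt at some index~$i$, meaning the auxiliary graph
\[
H_i \;:=\; \overline{G}[N(a_i)] \setminus (M_1\cup\cdots\cup M_{i-1})
\]
has no perfect matching. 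Because $|N(a_i)|=\deg_G a_i$ is even, $H_i$ is also not hypomatchable, and its complement inside $N(a_i)$, namely $K_i:=G[N(a_i)]\cup(M_1\cup\cdots\cup M_{i-1})$, is the natural candidate for Lemma~\ref{lemma-match}.

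Before invoking that lemma I must verify that $K_i$ contains no complete multipartite subgraph of minimum degree at least~$t$. This holds because $G$ does not strongly immerse $K_t$: every edge of $M_j$ was produced when splitting off $a_j$ as a length-two path through $a_j$ in the original $G$, so a strong $K_t$-immersion inside $K_i$ produced by Lemma~\ref{lemma-compmult} would lift, by substituting the length-two paths back for the matching edges, to a strong $K_t$-immersion in~$G$. Lemma~\ref{lemma-match} then yields $W\subseteq N(a_i)$ with $|W|\le t-1$ such that each $w\in W$ is adjacent in $K_i$ to all vertices of $N(a_i)\setminus (T\cup W)$; translating back to $G$ costs at most $i-1$ adjacencies per vertex of~$W$ (one per previously used matching, since $M_j\cap E(G)=\emptyset$), so every $w\in W$ is non-adjacent to at most $|T\cup W|-1+(i-1)$ other vertices of $N(a_i)$ and therefore shares all but $O(t)$ of the vertices of $N(a_i)$ as common neighbours with~$a_i$.

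Finally I would set $A':=A\cup W$; one may assume $W\setminus A\ne\emptyset$ (otherwise replace $W$ by a minimal offender), so $A\subsetneq A'$ and $|A'|\le |A|+t-1<|A|+t$. The witness set $B'$, of size between $d-|A'|$ and $d$, will be carved inside $B\cap N(a_i)$ while avoiding $T\cup W$; this ensures that each $w\in W$ has at most $i-1$ non-neighbours in $B'$, and each $a_j\in A$ inherits its bound on non-neighbours in $B'\subseteq B$ from the old state. The ordering for condition~(ii) places $a_i$ first, the remaining $a_j\in A$ next in their old relative order, and the vertices of $W\setminus A$ at the tail, where the larger permissible defect $|A'|+2j$ easily accommodates the $O(t)$ non-neighbours contributed by $W$. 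The main obstacle of the proof is the delicate arithmetic reconciling the size requirement $|B'|\ge d-|A'|$ with the structural constraints needed to guarantee condition~(ii) simultaneously for the old and the new vertices; this is what burns through the constant~$11$ in $d\ge 11t$, and it further requires careful accounting of the ``used'' matchings $M_1,\ldots,M_{i-1}$ and of the Eulerian-deficiency allowance $\sum_v\max(0,d-\deg v)<d$, which may cause $\deg_G a_i$ to fall slightly below~$d$.
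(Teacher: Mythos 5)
Your high-level plan coincides with the paper's: peel off the vertices of $A$ one at a time, and when a split must fail, invoke Lemma~\ref{lemma-match} on the obstruction to extract a small set $W$ of high-degree vertices and adjoin it to $A$. However, the sketch has concrete gaps at the exact places where the paper's proof does its real work.

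First, the ``simplifying assumption that $A$ is independent'' is not a harmless reduction. As you split off $a_p,a_{p-1},\ldots$, the intermediate graphs $G_i$ acquire parallel edges and loops (some with one end in $A$ and one end outside), and these cannot simply be discarded without destroying the degree-preservation requirement in the definition of a splittable set. The paper handles this by defining the auxiliary graph $H$ on the \emph{edges} incident with $a$ (rather than on $N(a)$) and by carefully tagging parallel edges as primary/secondary, then tracking the sets $Q_i$ (centers of the near-matchings deposited inside $B$) and $R_i$ (vertices outside $A$ meeting a double edge), with the invariant $|Q_i|+|R_i|\le i$. Your passing remark about ``organising the edges of $G[A]$ into a near-matching'' does not replace this bookkeeping; the near-matchings in the paper live on $B$, not on $G[A]$.

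Second, your choice of $B'\subseteq (B\cap N(a_i))\setminus(T\cup W)$ together with the reordering that moves $a_i$ to the front does not satisfy condition~(i). Bounding crudely, $|(B\cap N(a_i))\setminus(T\cup W)|\ge |B|-(|A|+2i)-|T|-|W|\ge d-8t+O(1)$, while condition~(i) requires $|B'|\ge d-|A'|\ge d-2t+O(1)$; the shortfall of roughly $6t$ vertices cannot be absorbed by adjusting the constant $11$ because the inequality $|B'|\ge d-|A'|$ must hold identically in $d$. The paper avoids this entirely by taking $B'=B\setminus W'$ (which makes condition~(i) automatic) and keeping the original ordering of $A$ unchanged with the new vertices appended at the tail; the ``defect budget'' $|A'|+2j$ for the new indices $j\ge p+1$ is then just barely enough to absorb the contributions $|T'|+|R''|+i$ coming from the split-off history. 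The reordering you propose is both unnecessary and the reason you were forced into the too-small $B'$.

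In short: you correctly identified the strategy and the lemmas to apply, but the two steps you flagged as ``delicate arithmetic'' are exactly where your sketch breaks, and the paper's invariants (the edge-level auxiliary graph $H$ and the $Q_i,R_i$ accounting) together with the choice $B'=B\setminus W'$ are what make those steps go through.
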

\begin{proof}
Let $a_1$, \ldots, $a_p$ be the ordering of $A$ from Definition~\ref{def-state}.  We try to completely split off $a_p$, $a_{p-1}$, \ldots, $a_1$ in this order,
obtaining a sequence $G_0=G$, $G_1$, \ldots, $G_m$ of multigraphs strongly immersed in $G$.  We maintain the following invariants for all $i\ge 0$:
\begin{enumerate}
\item $V(G_i)=V(G)\setminus \{a_p,\ldots, a_{p-i+1}\}$ and $E(G_i[B])\setminus E(G)$ is a union of $i$ near-matchings with pairwise disjoint centers; let $Q_i$ denote
the union of these centers.
\item $\deg_{G_i} v=\deg_G v$ for all $v\in V(G_i)$.
\item All loops are incident with vertices of $A$.  Each parallel edge of $G_i$ either has both ends in $A$ or one end in $A$ and multiplicity $2$.
Let $R_i$ denote the set of vertices of $G_i$ not belonging to $A$ that are incident with such a double edge.  Each vertex of $R_i$ is incident with only one double edge,
$Q_i\cap R_i=\emptyset$ and $|Q_i|+|R_i|\le i$.
\end{enumerate}
Clearly, these invariants hold for $G_0$.  Suppose we already constructed $G_{i-1}$.  Let $a=a_{p-i+1}$.  For each double edge joining $a$ with a vertex of $R_{i-1}$,
call one of the edges of the pair \emph{primary} and the other one \emph{secondary}.
Let us define an auxiliary graph $H$ as follows.  The vertices of $H$ are the edges of $G_{i-1}$ incident with $a$ such that their other ends belong to $V(G_{i-1}-A)$.
Two distinct edges $e_1=au$ and $e_2=av$ are adjacent in $H$ if either $uv\in E(G_{i-1})$ or $u,v\in R_{i-1}$ and at least one of $e_1$ and $e_2$ is secondary (including the case $u=v$, see Figure~\ref{fig:primsec} for an illustration).
\begin{figure}[H]
\centering
\begin{tikzpicture}
[scale=.5,auto=left,every node/.style={circle,fill=,scale=.6}]
\node[label=above:\large $u$](nu) at (1,10) {};
\node[label=above:\large $v$] (nv) at (9,10) {};
\node[label=below:\large $a$] (na) at (5,6)  {};
\path (na) edge [bend left,color=blue,line width=1pt] (nv);
\path (nv) edge [bend left,color=red,line width=1pt] (na);
\path (na) edge [bend left,color=blue,line width=1pt](nu);
\path (nu) edge [bend left,color=red,line width=1pt](na);
\draw (nu) [dashed] edge (nv);
\end{tikzpicture}
\hspace{2cm}\begin{tikzpicture}
[scale=.5,auto=left,every node/.style={circle,fill=,scale=.6}]
\node[label=above:\large $au$,fill=blue](bau) at (1,10) {};
\node[label=above:\large $av$,fill=blue] (bav) at (9,10) {};
\node[label=below:\large $au$,fill=red](rau) at (1,6) {};
\node[label=below:\large $av$,fill=red] (rav) at (9,6) {};
\draw (rav) edge (rau);
\draw (rav) edge (bau);
\draw (rau) edge (bav);
\draw (rav) edge (bav);
\draw (rau) edge (bau);
\draw (bau) [dashed] edge (bav);
\end{tikzpicture}
\caption{Primary (blue) and secondary(red) edges in $G_{i-1}$ and the corresponding subgraph in $H$(assuming $uv\notin E(G_{i-1})$)}
\label{fig:primsec}
\end{figure}
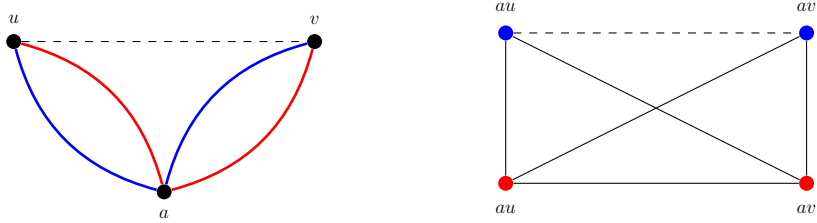
Let us first consider the case when the complement
$\overline{H}$ of $H$ either has a perfect matching or is hypomatchable.
Note that there exists $e\in V(\overline{H})\setminus (Q_{i-1}\cup R_{i-1})$, since $a$ has at least $|B|-\bigl(|A|+2(p-i+1)\bigr)\ge d-4t>t>i$ distinct neighbors in $B$.
If $\overline{H}$ has a perfect matching, then let $M$ be such a perfect matching.
If $\overline{H}$ is hypomatchable, then let $M$ be a perfect matching in $\overline{H}-e$. 
Let $M'$ be the graph with vertex set $V(G_{i-1}-A)$ and edge set
$\{xy:e_1e_2\in M, e_1=ax, e_2=ay\}$.

Note that the construction of $H$ implies that $M'$ is a near-matching contained
in the complement of $G_{i-1}$ and that the center $Q$ of $M'$ is a subset of $R_{i-1}$.
The graph $G_i$ is obtained from $G_{i-1}$ by splitting off $a$ completely so that the edges from $a$ to $V(G_{i-1}-A)$ (except for $e$ when $\overline{H}$ is hypomatchable)
give rise to $M'$ and the rest of edges (those to $A$ and the edge $e$ if $\overline{H}$ is hypomatchable) are split off arbitrarily (it is possible to split off $a$ completely, since it has even degree
and we do not restrict parallel edges and loops with both ends in $A$).  
The double edges incident with the center of $M'$ were split off and
at most one new parallel edge with an end outside of $A$ is created; this happens in the case when $\overline{H}$ is hypomatchable and
$e=az$ ends up to be split off to form such a parallel edge.  By our original choice of $e$, $z\not\in R_{i-1}$, thus $z$ is incident with at most one parallel edge in $G_i$ and such a parallel edge has multiplicity $2$.
Therefore, letting $Q_i=Q_{i-1}\cup Q$, we have $R_i\subseteq (R_{i-1}\setminus Q)\cup\{z\}$, and thus $Q_i\cap R_i=\emptyset$ and $|Q_i|+|R_i|\le |Q_{i-1}|+|R_{i-1}|+1\le i$.
Hence all the invariants are satisfied.

Since $A$ is not splittable, we cannot split off all vertices of $A$ in this way---the invariants imply that $G_p$ would be a graph with vertex set $V(G)\setminus A$
that is strongly immersed in $G$, such that $\deg_{G_p} v=\deg_G v$ for every $v\in V(G_p)$.  Hence, for some $i\le p$, the graph $\overline{H}$ neither has a perfect matching nor
is hypomatchable.  Note that $H$ does not contain a complete multipartite subgraph with minimum degree at least $t+|R_{i-1}|\le 2t$: otherwise, after removing vertices corresponding to the secondary edges we obtain a complete multipartite subgraph with minimum degree at least $t$ which, by definition, corresponds to a complete multipartite subgraph with minimum degree at least $t$ in $G_{i-1}$, in contrary to Lemma~\ref{lemma-compmult}.

Let $T$ and $W$ be the sets from Lemma~\ref{lemma-match} applied to $H$, where $|T|\le |W|<2t$ and let $W'=\{w\in V(G_{i-1}):aw\in W\}$. Clearly $W'\neq \emptyset$. We will show that $(G,A',B')$ is a $(t,d)$-state for  $A'=A\cup W'$ and $B'=B\setminus W'$.

First we check that $|W'|\leq t-1$. Let $T'=\{t\in V(G_{i-1}):at\in T\}$, $R'=R_{i-1}\cap W'$ and $R''=R_{i-1}\setminus R'$. Note that $W'\neq\emptyset$ and since each edge in $W$ has multiplicity at most $2$,
we have $|T|\le|W|\le |W'|+|R'|$.  Furthermore, each vertex of $W'$ is adjacent in $G_{i-1}$ to all vertices of of $(N(a)\cap B)\setminus (W'\cup T'\cup R'')$. But we have
\begin{align*}
|(N(a)\cap B) \setminus (T'\cup W'\cup R'')| &\ge |N(a)\cap B|-|T|-|W|-|R_{i-1}| \\
&\ge |B|-|B\setminus N(a)|-2|W|-|R_{i-1}| \\
&\ge d-9t> t.
\end{align*}
Hence if $|W'|\ge t$, then $G_{i-1}$ contains $K_{t,t}$ as a subgraph which implies that $G$ contains $K_t$ as a strong immersion, a contradiction.  

So to finish the proof of the lemma it remains to check conditions (i)-(ii) in Definition~\ref{def-state}. The condition (i) holds trivially. For condition (ii), let $A'=\{a_1,\ldots, a_{p+k}\}$, where
$a_{p+1}$, \ldots, $a_{p+k}$ is an arbitrary ordering of $W'$.  For $1\le j\le p$, we trivially have $|B'\setminus N(a_j)|\le |B\setminus N(a_j)|\le |A|+2j\le |A'|+2j$.

Now suppose $p+1\le j\le p+k$.  In $G_{i-1}$, $a_j$ is adjacent to all vertices of $(N(a)\cap B)\setminus (W'\cup T'\cup R'')$.
Thus, $a_j$ has at most $|B\setminus N(a)|+|T'|+|R''|$ non-neighbors in $B'$ in the graph $G_{i-1}$.  Since $E(G_{i-1}[B])\setminus E(G)$ is a union of $(i-1)$ near-matchings with pairwise distinct centers,
$G_{i-1}$ contains at most $i$ edges incident with $a_j$ that do not belong to $G$.  Recall that $|T'|\le |W'|+|R'|$.  Hence in $G$ we have
\begin{align*}
|B'\setminus N(a_j)|&\leq |B\setminus N(a)|+|T'|+|R''|+i\\
&\le |A|+2(p-i+1)+|W'|+|R_{i-1}|+i\\
&\le |A|+2(p-i+1)+|W'|+2i-1 \\
&=|A'|+2p+1\\
&\le |A'|+2j,\end{align*} as desired.
\end{proof}

It remains to show that there exists some $(t,d)$-state to which Lemma~\ref{lemma-nosplit} can be applied, and that
the number of applications of Lemma~\ref{lemma-nosplit} is bounded.  These facts follow from the next two lemmas.

\begin{lemma}\label{lemma-exstart}
Let $d$ be a positive integer. If $G$ is a graph such that $\sum_{v\in V(G)} \max (0, d-\deg v)<d$, then $G$ contains a vertex of
degree at least $d$.
\end{lemma}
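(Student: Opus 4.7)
The plan is to argue by contradiction: suppose every vertex of $G$ has degree strictly less than $d$, and derive that the sum $\sum_{v\in V(G)} \max(0, d-\deg v) = \sum_{v\in V(G)} (d - \deg v)$ is at least $d$, contradicting the hypothesis.

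First I would split into two cases based on $n := |V(G)|$. In the easy case $n \ge d$, each summand $d - \deg v$ is at least $1$ (since $\deg v \le d - 1$), so the sum is at least $n \ge d$, done.

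The slightly more delicate case is $n < d$. Here I would use the fact that $G$ is simple, so $\deg v \le n - 1$ for every vertex, and therefore $d - \deg v \ge d - n + 1$ for each $v$. This gives
\begin{equation*}
\sum_{v \in V(G)} (d - \deg v) \;\ge\; n(d - n + 1).
\end{equation*}
It then suffices to observe that the quadratic $g(n) = n(d - n + 1) = (d+1)n - n^2$ satisfies $g(1) = d$ and $g(d) = d$, and is concave in $n$, so $g(n) \ge d$ for every integer $n$ with $1 \le n \le d$. Combined with the first case, this yields the required contradiction.

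There is no real obstacle here; the only thing to be mildly careful about is to use the simple-graph bound $\deg v \le n - 1$ in the small-$n$ regime, since the naive bound $d - \deg v \ge 1$ is only strong enough when $n \ge d$. The argument uses no graph theory beyond $\sum_v \deg v \le n(n-1)$ in a simple graph.
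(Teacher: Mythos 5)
Your proof is correct, but the route differs from the paper's. You argue globally: assume every degree is below $d$, then split on whether $n:=|V(G)|$ is at least $d$ (so each of the $n\ge d$ summands is $\ge 1$) or less than $d$ (so each summand is $\ge d-n+1$ by simplicity, giving a total of at least $n(d-n+1)\ge d$ via the concave quadratic). The paper instead argues locally: it takes any vertex $x$ of degree less than $d$ and observes that if \emph{all} of its neighbors also had degree less than $d$, then each such neighbor would contribute at least $1$ to the sum, so already $\deg x + (d - \deg x) = d$ would be a lower bound, contradicting the hypothesis; hence $x$ has a neighbor of degree at least $d$. The paper's argument is a touch slicker — no case split, no quadratic — and yields the slightly stronger fact that every low-degree vertex has a high-degree neighbor, whereas your counting is more elementary and symmetric in the vertices. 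Both crucially use simplicity (you via $\deg v\le n-1$, the paper via $\deg x \le |X\setminus\{x\}|$), and both are valid.
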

\begin{proof}
Let $X$ be the set of vertices of degree less than $d$ in $G$.  The claim is trivial if $X=\emptyset$, hence assume that there exists a vertex
$x\in X$.  If all neighbors of $x$ belonged to $X$, then since the graph $G$ is simple, we would have $\deg x\le \sum_{v\in V(G)\setminus\{x\}} \max (0, d-\deg v)$,
and thus
$$\sum_{v\in V(G)} \max (0, d-\deg v)=d-\deg x + \sum_{v\in V(G)\setminus\{x\}} \max (0, d-\deg v)\ge d,$$
contradicting the assumptions.  Therefore, $x$ has a neighbor $v$ not belonging to $X$, i.e., $\deg v \ge d$.
\end{proof}

\begin{lemma}\label{lemma-exsplit}
Let $t\ge 1$ and $d\ge 11t$ be integers.
Let $T=(G,\{a\},B)$ be a $(t,d)$-state. If $G$ does not contain $K_t$ as a strong immersion, then there exists a splittable set $A\subsetneq V(G)$ with $a\in A$.
\end{lemma}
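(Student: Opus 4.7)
The plan is to iteratively grow the set $A$ by repeatedly applying Lemma~\ref{lemma-nosplit}, starting from $A_0 = \{a\}$ and $B_0 = B$. At each step $i$ I would check whether the current $A_i$ is splittable; if so, we output $A := A_i$ and are done (and $A_i \subsetneq V(G)$ holds automatically, since by Lemma~\ref{lemma-exstart} $|V(G)| \ge d+1$, whereas the bounds below force $|A_i| \le 2t-2$). Otherwise, provided $|A_i| \le t-1$, Lemma~\ref{lemma-nosplit} produces a $(t,d)$-state $(G, A_{i+1}, B_{i+1})$ with $A_i \subsetneq A_{i+1}$ and $|A_{i+1}| < |A_i| + t$. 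Since $|A_i|$ strictly increases at every step, this process must halt.

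If the process halts at a splittable $A_i$, we are done. Otherwise it halts at the smallest $k$ with $|A_k| \ge t$; then $|A_{k-1}| \le t-1$, so $|A_k| < (t-1) + t = 2t-1$, giving $|A_k| \le 2t-2$. At this point I would apply Lemma~\ref{lemma-verydense} to $A'_0 := \{a_1, \ldots, a_t\}$, the first $t$ elements of the ordering guaranteed by Definition~\ref{def-state}(ii) for $(G, A_k, B_k)$. Each $a_i \in A'_0$ has at most $|A_k| + 2i \le |A_k| + 2t$ non-neighbors in $B_k$, so any two elements of $A'_0$ share at least
\[
|B_k| - (|A_k| + 2t) \;\ge\; d - 2|A_k| - 2t \;\ge\; 11t - 2(2t-2) - 2t \;=\; 5t + 4 \;\ge\; t
\]
common neighbors in $B_k$. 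Lemma~\ref{lemma-verydense} then yields a strong immersion of $K_t$ in $G$, contradicting the hypothesis and forcing the iteration to have terminated earlier with a splittable $A_i$.

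The main delicate point I expect is this final counting step: after the last growth of $A$, its size may have roughly doubled past $t$, yet the first $t$ vertices of the ordering must still share at least $t$ common neighbors in $B_k$. This is precisely where the hypothesis $d \ge 11t$ is used; it leaves a comfortable surplus of $5t+4$ in the common-neighbor count, absorbing both the $|A_k| \le 2t-2$ growth bound from the iteration and the $+2t$ coming from positions late in the ordering. The case $t = 1$ is trivial (every non-empty graph contains $K_1$), so I would assume $t \ge 2$ throughout, which ensures $|A_0| = 1 \le t-1$ and hence that the very first application of Lemma~\ref{lemma-nosplit} is legitimate.
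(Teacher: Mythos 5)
Your approach matches the paper's proof: iterate Lemma~\ref{lemma-nosplit} from $(G,\{a\},B)$ until either a splittable set is obtained or $|A|\ge t$, then apply Lemma~\ref{lemma-verydense} to the first $t$ vertices of the ordering from Definition~\ref{def-state}(ii) to obtain a contradictory $K_t$-immersion. One small arithmetic slip in your display: two vertices $a_i,a_j$ with each at most $|A_k|+2t$ non-neighbors in $B_k$ share at least $|B_k|-2(|A_k|+2t)$, not $|B_k|-(|A_k|+2t)$, common neighbors, so the correct chain is $|B_k|-2(|A_k|+2t)\ge (d-|A_k|)-2|A_k|-4t\ge 11t-3(2t-2)-4t=t+6\ge t$; the conclusion is unaffected, and in fact the paper uses the looser bound $|A'|\le 2t$ and computes $|B'|-8t\ge d-2t-8t\ge t$ in the same way.
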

\begin{proof}
Note that by Lemma~\ref{lemma-exstart}, $G$ has at least $d+1$ vertices.  If $G$ does not contain such a splittable set, then repeated applications of Lemma~\ref{lemma-nosplit}
give us a $(t,d)$-state $(G,A',B')$, where $a\in A'$ and $t\le |A'|\le 2t$.  Let $a_1$, \ldots, $a_p$ be the ordering of $A'$ as in Definition~\ref{def-state}.
For $1\le i<j\le t$, we have $|B'\setminus N(a_i)|\le |A'|+2i\le 4t$ and $|B'\setminus N(a_j)|\le 4t$, and thus $a_i$ and $a_j$ have at least $|B'|-8t\ge d-|A'|-8t\ge t$ common neighbors in $B'$.
However, by Lemma~\ref{lemma-verydense}, this implies that $G$ contains $K_t$ as a strong immersion, which is a contradiction.
\end{proof}

Combining these results, we now easily obtain a strong immersion of $K_t$ as required.

\begin{lemma}\label{lemma-eulex}
Let $t\ge 1$ and $d\ge 11t$ be integers.
If $G$ is an Eulerian graph such that $\sum_{v\in V(G)} \max (0, d-\deg v)<d$, then $G$ contains $K_t$ as a strong immersion.
\end{lemma}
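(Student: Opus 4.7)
My plan is to prove Lemma~\ref{lemma-eulex} by induction on $|V(G)|$. Supposing for contradiction that $G$ has no strong $K_t$-immersion, I would first invoke Lemma~\ref{lemma-exstart} to produce a vertex $a \in V(G)$ of degree at least $d$ (so in particular $|V(G)|\ge d+1\ge 11t+1$). Choosing any $B\subseteq N(a)$ with $|B|=d$, the triple $(G,\{a\},B)$ is immediately seen to be a $(t,d)$-state: condition~(i) of Definition~\ref{def-state} reads $d-1\le d\le d$, and condition~(ii) holds with the trivial one-element ordering since $|B\setminus N(a)|=0\le|A|+2$.

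Next I would apply Lemma~\ref{lemma-exsplit} to $(G,\{a\},B)$; since $G$ admits no strong $K_t$-immersion by assumption, this produces a splittable set $A\subsetneq V(G)$ with $a\in A$. Unpacking the definition of splittable gives a graph $G'$ with $V(G')=V(G)\setminus A$ that is strongly immersed in $G$ and satisfies $\deg_{G'}v=\deg_G v$ for every $v\in V(G')$. Because degrees are preserved, $G'$ remains Eulerian and inherits
\[
\sum_{v\in V(G')}\max(0,d-\deg_{G'} v)\le \sum_{v\in V(G)}\max(0,d-\deg_G v)<d.
\]
Since $V(G')$ is nonempty (as $A$ is a proper subset) and $|V(G')|<|V(G)|$, the induction hypothesis produces a strong immersion of $K_t$ in $G'$.

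The final step, and the one I expect to require the most care, is the composition of the two strong immersions. I would substitute, for each edge of $G'$ used by the $K_t$-paths in $G'$, the corresponding path in $G$ supplied by the strong immersion of $G'$ in $G$. The two conditions I would then verify are: (a)~\emph{edge-disjointness} of the resulting paths in $G$, which follows because distinct $G$-paths substituted for distinct $G'$-edges are edge-disjoint by hypothesis, and each $K_t$-path uses each $G'$-edge at most once; and (b)~\emph{internal disjointness from the branch set} $\varphi(V(K_t))\subseteq V(G')$, which follows because the $K_t$-paths in $G'$ are already internally disjoint from $\varphi(V(K_t))$, while the new internal vertices introduced by substitution lie in $V(G)\setminus V(G')$ and therefore cannot belong to $\varphi(V(K_t))$. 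This yields a strong $K_t$-immersion in $G$, contradicting the assumption and closing the induction. Aside from this bookkeeping, every other step is a direct application of the previously established lemmas.
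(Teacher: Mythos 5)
Your proof is correct and takes essentially the same approach as the paper: invoke Lemma~\ref{lemma-exstart} to seed a $(t,d)$-state, apply Lemma~\ref{lemma-exsplit} to obtain a splittable set $A$, and pass to the smaller degree-preserving graph $G'$ (the paper phrases this as a minimal counterexample rather than induction, which is equivalent). The composition step you spell out is the transitivity of strong immersion, which the paper leaves implicit; your verification is sound, with the only unmentioned bookkeeping being that the concatenated walks may need to be shortcut to simple paths, which preserves both edge-disjointness and internal disjointness from the branch set.
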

\begin{proof}
Suppose for a contradiction that $G$ does not contain $K_t$ as a strong immersion, and let $G$ be such a graph with the smallest number of vertices.
By Lemma~\ref{lemma-exstart}, there exists $v\in V(G)$ of degree at least $d$.  Let $B$ be a set of $d$ neighbors of $v$.
Then $T=(G,\{v\},B)$ is a $(t,d)$-state.  By Lemma~\ref{lemma-exsplit}, there exists a non-empty splittable $A\subsetneq V(G)$.
Hence, there exists a graph $G'$ with vertex set $V(G)\setminus A$ that is strongly immersed in $G$, such that $\deg_{G'} v=\deg_G v$ for every $v\in V(G')$,
and in particular $G'$ is Eulerian and $\sum_{v\in V(G')} \max (0, d-\deg v)<d$.  However, by the minimality of $G$, the graph $G'$ contains $K_t$ as a strong immersion, which
is a contradiction.
\end{proof}

\begin{proof}[Proof of Theorem~\ref{maintheorem}]
Suppose for a contradiction that $G$ does not contain $K_t$ as a strong immersion.
Let $d\in\{11t,11t+1\}$ be even.  By Lemma~\ref{lemma-eul}, $G$ contains as a strong immersion an Eulerian graph $G'$ such that $\sum_{v\in V(G')} \max (0, d-\deg v)<d$.
However, then $G'$ contains $K_t$ as a strong immersion by Lemma~\ref{lemma-eulex}, which is a contradiction.
\end{proof}

\bibliographystyle{acm}
\bibliography{../data.bib}

\end{document}